\documentclass[final,3p,times]{elsarticle}
\usepackage{lineno,hyperref}
\usepackage{amsfonts}
\usepackage{enumerate}
\usepackage{subfigure}
\usepackage{amsmath}
\usepackage{mathtools}
\allowdisplaybreaks[4]
\usepackage{amsthm}
\usepackage{mathrsfs}
\usepackage{bm}
\usepackage{float}
\usepackage{dsfont}
\usepackage{cite}
\usepackage{booktabs}
\usepackage{bbm}
\usepackage{multicol}
\usepackage{multirow}
\usepackage{tabu}
\usepackage{cases}
\usepackage{diagbox}
\usepackage{url}
\usepackage{graphics}
\usepackage{color}

\biboptions{numbers,sort&compress}

\journal{EJOR}
\linespread{1.5}
\bibliographystyle{model5-names}
\biboptions{authoryear}

\hyphenation{op-tical net-works semi-conduc-tor}
\newtheorem{defi}{Definition}

\newtheorem{assum}{Assumption}

\newtheorem{lem}{Lemma}
\newtheorem{theo}{Theorem}
\newtheorem{coro}{Corollary}

\DeclareMathOperator*{\minimize}{minimize}
\DeclareMathOperator*{\maximize}{maximize}
\DeclareMathOperator*{\st}{subject\ to}

\begin{document}

\begin{frontmatter}

\title{Wasserstein Distributionally Robust Shortest Path Problem}

\author[thu]{Zhuolin Wang}
\ead{wangzl17@mails.tsinghua.edu.cn}
\author[thu]{Keyou You\corref{cor1}}
\ead{youky@tsinghua.edu.cn}
\author[thu]{Shiji Song}
\ead{shijis@tsinghua.edu.cn}
\author[bit]{Yuli Zhang}
\ead{zhangyuli@bit.edu.cn}

\cortext[cor1]{Corresponding author.}
\address[thu]{Department of Automation and BNRist, Tsinghua University, Beijing 100084, P.R. China}
\address[bit]{School of Management and Economics, Beijing Institute of Technology, Beijing 100081, P.R. China}

\begin{abstract}
This paper proposes a data-driven distributionally robust shortest path (DRSP) model where the distribution of the travel time in the transportation network can only be partially observed  through a {\em finite}  number of samples. Specifically, we aim to find an optimal path to minimize the worst-case $\alpha$-reliable mean-excess travel time (METT) over a Wasserstein ball, which is centered at the empirical distribution of the sample dataset and the ball radius quantifies the level of its confidence. {In sharp contrast to the existing DRSP models, our model is equivalently reformulated as a tractable mixed 0-1 convex problem, e.g., 0-1 linear program or 0-1 second-order cone program. Moreover, we also explicitly derive the distribution achieving the worst-case METT by simply perturbing each sample. Experiments demonstrate the advantages of our DRSP model in terms of the out-of-sample performance and computational complexity. Finally, our DRSP model is easily extended to solve the DR bi-criteria shortest path problem and the minimum cost flow problem.}

\end{abstract}

\begin{keyword}
Distributionally robust shortest path \sep ambiguity set \sep Wasserstein metric \sep METT \sep mixed 0-1 convex program  
\end{keyword}

\end{frontmatter}


\section{Introduction}
The shortest path problem is one of the most fundamental problems in the transportation network and has broad applications, see e.g. \citet{baxter2014incremental,tilk2017asymmetry,Cao2016Finding}. To obtain an optimal path,  the travel time in each arc of the network is essential. Due to different weather conditions, path capacity, traffic control and etc,  travel time is usually subject to large variabilities, which may greatly affect the  selection of an optimal path. In fact, travelers are not only concerned with the ``nominal" travel time of each path but also its reliability \citep{bertsekas1991analysis,Fosgerau2011The,nikolova2014mean}.

In the literature, stochastic shortest path (SSP) models have been proposed to handle the random uncertainty in travel time under different criteria such as effective travel time \citep{lo2003network}, percentile travel time \citep{frank1969shortest} and mean-excess travel time \citep{chen2010alpha}. These models require an exact distribution of travel time for finding an optimal path. In practice, it is difficult to obtain the exact distribution since it may be time-varying and can only be estimated through a finite sample dataset \citep{masson2006inferring}. Thus, a natural method is to approximate the SSP model via the sample-average approximation (SAA) method \citep{shapiro1998simulation,verweij2003sample}, where the true distribution is approximated by the discrete empirical distribution over the sample dataset. This method is only applicable to situations where the distribution is time-invariant and a large number of good samples can be generated cheaply.  When the sample dataset is of low quality, the empirical distribution may significantly deviate from the true distribution and the SAA method tends to exhibit poor out-of-sample performance. From this perspective, the SAA method is not always reliable.

An alternative approach is to apply the distributionally robust (DR) optimization technique to the shortest path problem \citep{cheng2013distributionally,shahabi2015robust,yang2017optimizing,Zhang2017Robust}, leading to a distributionally robust shortest path (DRSP) model. The DRSP model assumes that the true distribution belongs to an ambiguity set of distributions, over which an optimal path is to be found in some worst-case sense, e.g., the one minimizes the worst-case $\alpha$-reliable mean-excess travel time (METT)  \citep{Zhang2017Robust}.  For instance, the true distribution in \citet{chassein2019algorithms} is parameterized with a vector that is assumed to lie in some set constructed by exploiting samples.  However, it is NP-hard  to solve most of the DRSP models \citep{yu1998robust} and only a few DRSP models are tractable by some well-defined ambiguity set \citep{esfahani2018data}.  

In general, the ambiguity set should be large enough to include the true distribution with a high probability but can not be too ``large" to avoid too conservative decisions. Moreover, the associated DRSP model needs to be as tractable as possible. The moment-based ambiguity set which consists of distributions with specified moment constraints is adopted for the DR optimization problem \citep{delage2010distributionally}.  Specifically, the DRSP model in \citet{cheng2013distributionally,Zhang2017Robust} assumes that the ambiguity set contains distributions with the {\em exactly} known first and second moments. Observe that this may lead to poor decisions if mismatch moments are used for the ambiguity set. In fact, it is often the case that we cannot obtain exact moment information with a finite sample dataset. Thus, one inevitably needs to  further introduce uncertainty in the moment, which easily renders their DRSP models intractable.  To resolve it, a metric-based ambiguity set has been developed in  \citet{Hu2012Kullback,jiang2016data,erdougan2006ambiguous,pflug2007ambiguity,wozabal2012framework}.

%
In this paper, we propose a novel data-driven DRSP model with a metric-based ambiguity set for the unknown distribution of travel time, which is defined as a ball centered at the empirical distribution over the finite sample dataset and the ball radius reflects our confidence in the empirical distribution. That is, the higher the quality of the empirical distribution, the smaller the ball radius. This facilitates us to utilize the sample dataset in a flexible way to hedge uncertainty. Then, we further incorporate the support set information into our DRSP model where the travel time is restricted to an interval constructed from the sample dataset. Noting that the empirical distribution is discrete and the true distribution of travel time is usually continuous, we adopt the Wasserstein metric \citep{kantorovich1958space} to measure the distance between {\em any} two distributions, which is different from the Kullaback-Leibler divergence \citep{Hu2012Kullback,jiang2016data} and the Prokhorov metric  \citep{erdougan2006ambiguous}. 

Then, our DRSP model aims to minimize the worst-case METT  over the aforementioned Wasserstein ball. While the moment-based DRSP model in \citet{Zhang2017Robust} is  NP-hard, our  model can be equivalently reformulated as a solvable and finite mixed 0-1 convex program, e.g., 0-1 linear program (LP) or 0-1 second-order cone program (SOCP). Different from \citet{Zhang2017Robust}, the distribution that achieves the worst-case METT can also be explicitly derived by simply perturbing each sample, which is very helpful to assess the quality of the optimal path.  Since the Wasserstein ball includes the  true distribution with a high probability, 
the optimal path of our model offers good out-of-sample performance. Moreover, it asymptotically converges to the solution of the SSP model under the true distribution if the size of the sample dataset tends to infinity.  Finally, experiment results show that our model achieves a better out-of-sample performance than that of the moment-based DRSP model in \citet{Zhang2017Robust} and the SAA method.  Our method is easily extended to solve the robust bi-criteria shortest path problem and the robust minimum cost flow problem over the Wasserstein ball. Both problems are reduced to finite convex programs which can be solved efficiently by existing algorithms. A preliminary version of this work has been presented in \citet{Wang2019datadriven}, which only introduces the DRSP model over the Wasserstein set. Overall, the main contributions of this paper are summarized below.
\begin{enumerate}[(1)]
	\item We propose a novel data-driven DRSP model which aims to find an optimal path to minimize the worst-case METT within a Wasserstein ball. 
	\item In contrast to the existing DRSP models, our model is equivalently reformulated as a solvable mixed 0-1 convex problem and the worst-case distribution attaining the worst-case METT can be explicitly derived. 
	\item Besides its good out-of-sample performance and low computational complexity,  our model can be easily extended to solve the robust bi-criteria shortest path problem as well as minimum flow cost problems.	 
\end{enumerate}

The rest of the paper is organized as follows. Section \ref{PF} proposes the DRSP model to minimize the worst-case METT over a Wasserstein ball. In Section \ref{Mod-out-sup}, we reformulate our DRSP model as a mixed 0-1 convex problem. The distribution achieving the worst-case METT is explicitly derived  in Section \ref{worst-dis}. In Section \ref{EtOP}, our model is extended to solve other DR problems. In Section \ref{exper}, we perform numerical experiments to illustrate the performance of our model. Some conclusion remarks are drawn in Section \ref{con}.

\section{Problem Formulation} \label{PF}

\subsection{The Shortest Path Problem and Reliability Criteria}\label{class-shortest-path}
Consider a directed and connected network $\mathcal{G = (V, A)}$ with the vertex set $\mathcal{V}$ and the arcs set $\mathcal{A}$, where  $|\mathcal{V}| = m $ and $|\mathcal{A}| = n$. Let ${\xi}_{ij}$ denote the travel time on an arc $(i,j)$ in $\mathcal{A}$ and ${\bm{\xi}}=\{\xi_{ij}: (i,j)\in \mathcal{A} \}$ be the stacked vector of the travel time over all arcs. A directed path is  a sequence of arcs which connect a sequence of vertices in the same direction. Let $\bm{p} = \{p_{ij}:(i,j)\in \mathcal{A}\}$ represent a directed path from the origin vertex $o$ to the destination vertex $d$, where $p_{ij}$ is a binary decision variable and is one if arc $(i,j)$ is on the path from vertex $o$ to vertex $d$.

The standard shortest path problem aims to find an optimal path such that the sum of the travel time along the path is minimized \citep{ahuja1995network}, i.e.,

	\begin{align}\label{ssp1}
		\minimize_{\bf{p}} \ \ \  &\sum\limits_{(i,j)\in \mathcal{A}}{\xi}_{ij}p_{ij} \\
		 \st \ \ \ & \sum\limits_{j:(i,j)\in \mathcal{A}}p_{ij} - \sum\limits_{j:(j,i)\in \mathcal{A}}p_{ji} = b_i,  \forall i\in \mathcal{V}, \label{1b}\\
		& p_{ij} \in \{0,1\}, \forall (i,j)\in \mathcal{A} \label{1d},
	\end{align}
where $b_o=1, b_d = -1$ and $b_i = 0$ for $i \in \mathcal{V}/\{o,d\}$.
The constraint in (\ref{1b}) ensures the flow balance for the origin-destination pair $(o,d)$. In the sequel, let $\mathcal{P}$ be the set of feasible paths from the original vertex $o$ to the destination vertex $d$, i.e., 
\begin{equation}
\mathcal{P}=\{\bm{p}~|~\bm{p}~\text{satisfies}~\eqref{1b}~\text{and}~\eqref{1d}\}.
\end{equation}


The shortest path problem considers the network with a known vector of travel time, i.e., the constant vector ${\bm{\xi}}$ is assumed to be exactly given. In practice, travel time variability is unavoidable due to uncertain factors, e.g. traffic jams and weather conditions. Obviously, this vector has a  significant impact on finding an optimal path for travelers. For example, consider a simple network with only three vertices and three arcs in Figure \ref{fig:label}. The traveler aims to find an optimal path $\bm{p}^*$ from $1$ to $3$ with the random travel time ${\xi}_{ij}$.
\begin{figure}[h]
	\centering
	\includegraphics[scale=0.5]{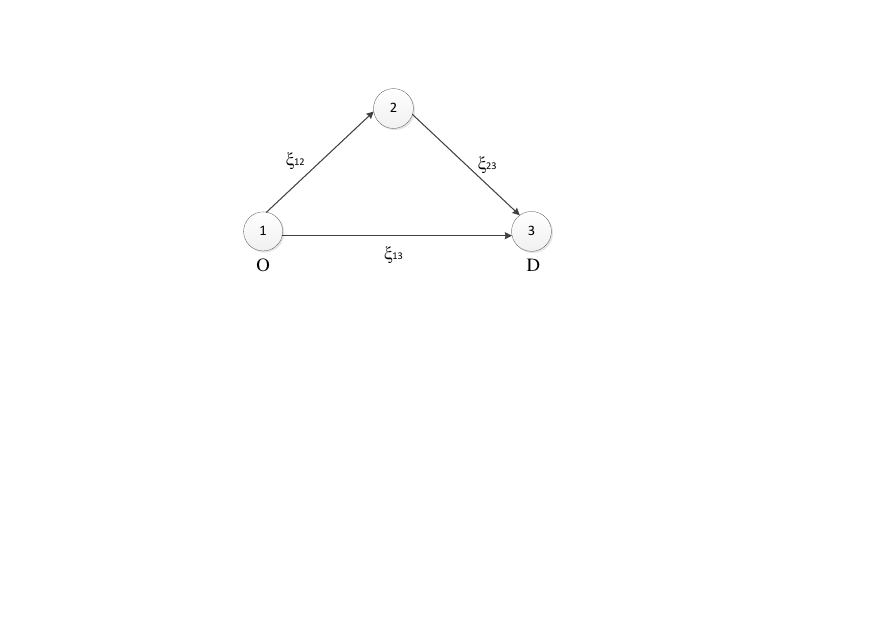}
	\caption{Topology of a simple network where \textsl{O} represents the origin vertex and \textsl{D} represents the destination vertex.}
	\label{fig:label}
\end{figure}
Suppose that a traveler observes ${{\xi}}_{13}=3.5$, ${{\xi}}_{12}=1.5$, ${{\xi}}_{23}=1.5$. Then an optimal path is obtained via solving \eqref{ssp1}, i.e.,  $1\rightarrow 2 \rightarrow 3$  where $a \rightarrow b$ represents the directed arc from vertex $a$ to vertex $b$. However, the travel time vector may change to ${{\xi}}_{13}=2.5,\ {{\xi}}_{12}=2,\ {{\xi}}_{23}=1.2$ and then the optimal choice is reset as $1 \rightarrow 3$. Thus, the optimal path in the above is not always reliable if the uncertainty in travel time is neglected.

To quantify the reliability of a path, some criteria have been proposed, such as effective travel time (ETT) \citep{lo2003network}, percentile travel time (PTT) \citep{lo2003network} and mean-excess travel time (METT) \citep{chen2010alpha}.  Under the assumption that ${\bm{\xi}}$ is a random vector with a distribution function $F$, the $\alpha$-reliable METT of a path and the corresponding SSP model are defined below.


\begin{defi}
	The $\alpha$-reliable METT of path $\bm{p}$ is defined as
	\begin{equation}
		\label{eq2}
		{\rm{METT}}_\alpha(\bm{p}) = \min_{t\in \mathbb{R}} \left\{t + \frac{1}{\alpha}\mathbb{E}_F  \left\{h(\bm{p},t,\bm{\xi})\right\}\right\},
	\end{equation}
	where $h(\bm{p},t,\bm{\xi}) = [{\bm{\xi}}^T\bm{p}-t]^+$ and $[x]^+ = \max\{x,0\}$. The associated SSP model is given by 
		\begin{equation}
		\label{SSP}
		\minimize_{\bm{p}\in \mathcal{P}} \ \ {\rm{METT}}_\alpha(\bm{p}).\\
	\end{equation}
\end{defi}
The METT which coincides with the conditional Value-at-Risk \citep{rockafellar2000optimization} is one of the most important criteria  to evaluate the path reliability. It is able to simultaneously address questions ``how much time do I need to allow" and ``how bad should I expect from the worst cases?" \citep{chen2010alpha}. However, solving the SSP model in \eqref{SSP} requires the exact distribution function $F$.

\subsection{Data-driven Robust Shortest Path Problem}

Usually, the true distribution $F$ in \eqref{SSP} is unavailable and can only be partially observed through a finite sample dataset $\{\hat{\bm{\xi}}^i\}_{i\in[N]}$ where $\hat{\bm{\xi}}^i$ is an independent sample of the random vector of travel time and $[N]=\{1,\ldots,N\}$.  In this case, a natural idea is to adopt the SAA method. Specifically,
 $F$ is approximated by an empirical distribution $F_N$ over the sample dataset, i.e.,
$$  {F}_{N}({\bm{\xi}})={\frac {1}{N}}\sum _{i=1}^{N}\bm {1} _{\{\hat{\bm{\xi}}^k\leq {\bm{\xi}}\}}
$$
where $
{\bm  {1}}_{{A}}$ is the indicator of event $A$.  Then the SSP model  in \eqref{SSP} is  approximately solved by
\begin{equation}
	\label{SAA}
	\minimize_{t\in \mathbb{R},~\bm{p} \in \mathcal{P}} \left\{ \ t+ \frac{1}{\alpha} \frac{1}{N}\sum\limits_{i=1}^N h(\bm{p},t,\hat{\bm{\xi}}^i)\right\}.
\end{equation}

By Glivenko-Cantelli theorem \citep{glivenko1933sulla,cantelli1933sulla}, the empirical distribution $F_N$ converges weakly to the true distribution $F$ as $N$ tends to infinity. This implies that the solution to the SAA model in (\ref{SAA}) converges to that of the SSP model in (\ref{SSP}).  That is, the SAA method is sensible only for the case where the true distribution $F$ can be well approximated by the empirical distribution.

When the size of the sample dataset is small and/or the sample $\hat{\bm{\xi}}^k$ is of low quality, the empirical distribution $F_N$ may deviate far from the true distribution $F$. More importantly, the distribution $F$ may not be constant and is {\em time-varying}. Then, an optimal path of the SAA model in \eqref{SAA} may exhibit poor out-of-sample performance and is not always reliable.

As in \citet{esfahani2018data} for the continuous optimization problem, we adopt a data-driven robust approach to hedge against the path unreliability either from the uncertainty of travel time or its distribution. The key idea is that the true distribution $F$ is expected to ``close" to the empirical distribution $F_N$ with a high probability. In particular, we believe that $F$ may belong to an ambiguity set $\mathcal{F}_N$ that is centered at the empirical distribution $F_N$ and its size reflects our confidence in $F_N$. The higher the confidence of $F_N$, the smaller the ambiguity set $\mathcal{F}_N$. 

Since the true distribution $F$ is usually continuous and the empirical distribution $F_N$ is discrete, we adopt the Wasserstein metric \citep{kantorovich1958space} to evaluate their distance, leading to a Wasserstein ball  $\mathcal{F}_N$. Then, we are interested in the  worst-case METT (w-METT)  over the Wasserstein ball $\mathcal{F}_N$, i.e.,
\begin{equation}
\label{RMETT}
\begin{aligned}
\text{w-METT}_{\alpha}(\bm{p})& =\sup_{F\in \mathcal{F}_N} {\rm{METT}}_\alpha(\bm{p})\\
&= \min_{t\in\mathbb{R}}\left\{t + \frac{1}{\alpha}\sup_{F\in \mathcal{F}_N}\mathbb{E}_F  \left\{h(\bm{p},t,\bm{\xi})\right\}\right\},
\end{aligned}
\end{equation}
where the second equality follows from \citet{1806.07418}.  Then our DRSP model is obtained as
\begin{equation}
\label{eq3}
\begin{aligned}
\minimize_{\bm{p} \in \mathcal{P}}\ \ \text{w-METT}_{\alpha}(\bm{p}).
\end{aligned}
\end{equation}

To evaluate our optimal path, we aim to find the worst-case distribution $F^*$ that achieves the w-METT, i.e., 
$$ \sup_{F\in \mathcal{F}_N}\mathbb{E}_F  \left\{h(\bm{p},t,\bm{\xi})\right\} = \mathbb{E}_{F^*}  \left\{h(\bm{p},t,\bm{\xi})\right\}.$$

\subsection{Ambiguity Set via Wasserstein Metric}
The key of the DRSP model \eqref{eq3} is how to construct the Wasserstein ball
\begin{equation}
	\label{Wset}
	\mathcal{F}_N=\{F\in \mathcal{M}(\Xi):d_W(F_N,F)\le \epsilon_N\},
\end{equation}
where $\epsilon_N \ge 0$ reflects our confidence in the empirical distribution $F_N$ and $\mathcal{M}(\Xi)$ is the set of some probability distributions supported on $\Xi$, i.e.,
$\mathcal{F}_N$ contains all distributions within the $\epsilon_N$-Wasserstein distance from $F_N$, in which the metric $d_W$ is defined as follows.

\begin{defi}
	The Wasserstein metric $d_W$:$\mathcal{M}(\Xi) \times \mathcal{M}(\Xi) \rightarrow \mathbb{R}_+$ is defined as
	\begin{equation*}
		\begin{aligned}
			 d_W(F_1,F_2) & = \inf\left\{\int_{\Xi\times\Xi} d({\bm{\xi}}_1,{\bm{\xi}}_2) K(\mathrm{d}{{\bm{\xi}}}_1,\mathrm{d}{{\bm{\xi}}}_2):\right.\\ 
			&\left.\int_{\Xi} K({{\bm{\xi}}}_1,\mathrm{d}{{\bm{\xi}}}_2)=F_1({\bm{\xi}}_1),
			\int_{\Xi} K(\mathrm{d}{{\bm{\xi}}}_1,{{\bm{\xi}}}_2)=F_2({\bm{\xi}}_2)\right \},
		\end{aligned}
	\end{equation*}
	where $(\Xi,d)$ is a Polish metric space, $K: \Xi\times\Xi \rightarrow  \mathbb{R}_+$ is the joint distribution of $F_1 \in \mathcal{M}(\Xi) $ and $F_2 \in \mathcal{M}(\Xi)$. Moreover, $d({\bm{\xi}}_1,{\bm{\xi}}_2)= {\| {\bm{\xi}}_1-{\bm{\xi}}_2 \|}_p $ where $\| \cdot \|$ represents $l_p$-norm on $\mathbb{R}^n$.
\end{defi}

Though $d_W$ satisfies the axioms of a metric, it may take the infinity value and thus is not a real distance. We need the following assumption on the set $\mathcal{M}$ \citep{ambrosio2013user}, under which $d_W$ is actually a distance metric.

\begin{assum} \label{assum1}
	For any distribution $ F \in \mathcal{M}(\Xi)$, it holds that  $$\int_{\Xi}{\| {\bm{\xi}}\|}_p F(\mathrm{d}{\bm{\xi}})<\infty.$$
\end{assum}

Assumption \ref{assum1} requires the first moment of the distribution $F$ to be finite, and only sacrifices little modeling power. 

To the best of our knowledge, we are the first to adopt the Wasserstein ball in the DRSP model.  We note that a similar line of this research for a traveling salesman problem is studied in \citet{carlsson2018wasserstein}.  Alternative metric-based ambiguity sets have also been adopted in DR optimizations,  e.g., Kullback-Leibler set in \citet{Hu2012Kullback,jiang2016data}, and Prokhorov set in \citet{erdougan2006ambiguous}. However, the Kullback-Leibler metric is unable to effectively evaluate the distance between a continuous distribution and a discrete one. In particular, it enforces the associated $\mathcal{F}_N$ to be a set of discrete distributions. The Prokhorov metric easily leads to an intractable shortest path model.

When the sample dataset is reasonably large, the Wasserstein ball $\mathcal{F}_N$ includes the true distribution with a high probability \citep{esfahani2018data} and thus the DRSP model is expected to exhibit good out-of-sample performance. Importantly, the proposed DRSP model can be equivalently reformulated as a mixed 0-1 program which is solvable via existing optimization techniques, e.g. the outer approximation decomposition algorithm \citep{duran1986outer}, the branch-and-bound method \citep{gupta1985branch} and the extended cutting plane method \citep{westerlund1995extended}.

\section{Reformulation of the DRSP Model} \label{Mod-out-sup}

In this section, we transform the DRSP model (\ref{eq3}) over the  Wasserstein ball $\mathcal{F}_N$ in (\ref{Wset}) with or without the support set to a finite mixed 0-1 convex problem respectively.

\renewcommand{\arraystretch}{1.2}  
\begin{table*}[htb]
	\centering
	\begin{tabular}{|c| p{3cm}<{\centering}| p{3cm}<{\centering}| p{3cm}<{\centering}| p{3cm}<{\centering}|}
		\hline
		\multirow{2}[0]{*}{Norm}&
		\multicolumn{2}{c}{Without Support Set }&\multicolumn{2}{|c|}{With Support Set }\cr\cline{2-5}
		&w-METT&DRSP Model&w-METT&DRSP Model\cr
		\hline
		\multirow{1}{1cm}{\centering $p=1$}&LP&Mixed 0-1 LP&LP&Mixed 0-1 LP\cr\hline
		\multirow{1}{1cm}{\centering $p=2$}&LP&Mixed 0-1 SOCP&SOCP&Mixed 0-1 SOCP\cr\hline
		\multirow{1}{1cm}{\centering $p=\infty$}&LP&{Mixed 0-1 LP}&LP&Mixed 0-1 LP\cr\hline
		\multirow{2}[0]{*}{Otherwise}&\multirow{2}[0]{*}{LP}&{Mixed 0-1 Convex Program} &{Mixed 0-1 Convex Program} &{ Mixed 0-1 Convex Program}\cr
		\hline
	\end{tabular}
	\caption{Equivalent problems of the our DRSP model, where $p$ represents the $l_p$-norm on $\mathbb{R}^n$.}
	\label{form}
\end{table*}

\subsection{The DRSP Model Without Support Set} \label{nosupport}

We first derive equivalent formulations for the w-METT (\ref{RMETT}) and the proposed DRSP model (\ref{eq3}) over the Wasserstein ball $\mathcal{F}_N$ without a support set.

\begin{theo}
	\label{thoe1}
	Under Assumption \ref{assum1}, the w-METT in (\ref{RMETT}) over the Wasserstein ball $\mathcal{F}_N$ can be computed by a finite linear programming (LP) problem

		\begin{subequations}
		\label{eqMETT}
		\begin{align}
			 \minimize_{t,\bm{s},\lambda} \ \ \ & t + \frac{1}{\alpha}\left\{\frac{1}{N}\sum\limits_{i=1}^{N} s_i + \lambda\epsilon_N \right\}  \\
			\st \ \ \  &\bm{p}^T\hat{\bm{\xi}}^i-t \le s_i , s_i\ge 0,\ \  \forall i \in [N]  \label{ub} \\
			& {\| \bm{p} \|}_q  \le \lambda, \label{uc}
		\end{align}
		\end{subequations}
	 where ${\| \cdot \|}_q$ is the dual of $l_p$-norm, i.e.,$1/p+1/q=1$.
	
	 Moreover, the DRSP model (\ref{eq3}) is equivalently reformulated as the following mixed 0-1 convex problem 
	\begin{equation}
		\label{UDP}
		\begin{aligned}
			 \minimize_{\bm{p},t,\bm{s},\lambda} \ \ \ & t + \frac{1}{\alpha}\left\{\frac{1}{N}\sum\limits_{i=1}^{N} s_i + \lambda\epsilon_N \right\}  \\
			 \st \ \ \ &\bm{p}\in \mathcal{P},  (\ref{ub})\ \  \text{and} \ \ (\ref{uc}).\\	
		\end{aligned}
	\end{equation}
\end{theo}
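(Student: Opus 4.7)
The plan is to attack the inner maximization $\sup_{F \in \mathcal{F}_N} \mathbb{E}_F\{h(\bm{p},t,\bm{\xi})\}$ directly by invoking the strong-duality result for Wasserstein distributionally robust problems (the reference \citep{1806.07418} already used in \eqref{RMETT} covers exactly this). Once the inner problem is shown to be equivalent to a finite linear program in the dual variables, the reformulation \eqref{UDP} follows by appending $\bm{p} \in \mathcal{P}$ and merging the outer minimizations over $\bm{p}$, $t$ and the dual variables into a single joint problem.

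By Wasserstein strong duality, for fixed $\bm{p}$ and $t$,
\begin{equation*}
\sup_{F \in \mathcal{F}_N} \mathbb{E}_F\{h(\bm{p},t,\bm{\xi})\} = \inf_{\lambda \geq 0}\left\{\lambda \epsilon_N + \frac{1}{N}\sum_{i=1}^N \sup_{\bm{\xi}}\bigl[h(\bm{p},t,\bm{\xi}) - \lambda \|\bm{\xi} - \hat{\bm{\xi}}^i\|_p\bigr]\right\}.
\end{equation*}
Since $h(\bm{p},t,\bm{\xi}) = \max\{\bm{\xi}^T\bm{p} - t,\,0\}$, the key step is to evaluate the inner $\sup_{\bm{\xi}}$ in closed form by splitting on the sign of $\bm{\xi}^T\bm{p} - t$. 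On $\{\bm{\xi} : \bm{\xi}^T\bm{p} < t\}$ the integrand is $-\lambda \|\bm{\xi} - \hat{\bm{\xi}}^i\|_p \leq 0$, with supremum $0$ (approached as $\bm{\xi} \to \hat{\bm{\xi}}^i$). On $\{\bm{\xi} : \bm{\xi}^T\bm{p} \geq t\}$ the change of variables $\bm{\eta} = \bm{\xi} - \hat{\bm{\xi}}^i$ yields $\hat{\bm{\xi}}^{iT}\bm{p} - t + \sup_{\bm{\eta}}\{\bm{\eta}^T\bm{p} - \lambda \|\bm{\eta}\|_p\}$, and by the dual-norm inequality the trailing supremum is $0$ when $\|\bm{p}\|_q \leq \lambda$ and $+\infty$ otherwise. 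Combining the two cases, the $i$-th inner supremum collapses to $[\hat{\bm{\xi}}^{iT}\bm{p} - t]^+$ under the implicit feasibility condition $\|\bm{p}\|_q \leq \lambda$.

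The epigraphic linearization $[\hat{\bm{\xi}}^{iT}\bm{p} - t]^+ = \min\{s_i : s_i \geq 0,\ s_i \geq \hat{\bm{\xi}}^{iT}\bm{p} - t\}$ then converts the dual into \eqref{eqMETT} with constraints \eqref{ub} and the norm constraint \eqref{uc}; pulling the outer $\min_{\bm{p} \in \mathcal{P}}$ inside produces \eqref{UDP}. The constraint \eqref{uc} specializes to linear inequalities when $q \in \{1,\infty\}$ (i.e., $p \in \{\infty, 1\}$) and to a second-order cone when $q = 2$, which together with the $0$--$1$ constraint $\bm{p} \in \mathcal{P}$ matches the classification in Table~\ref{form}.

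The main obstacle I anticipate is not the algebraic rearrangement but rigorously justifying the Wasserstein strong duality for this non-smooth, piecewise-linear loss: one must verify that the linear growth of $h(\bm{p},t,\bm{\xi})$ in $\bm{\xi}$ is compatible with the first-moment requirement of Assumption~\ref{assum1}, so that both primal and dual values are finite and no duality gap arises. A minor technical subtlety is that $\sup_{\bm{\eta}}\{\bm{\eta}^T\bm{p} - \lambda \|\bm{\eta}\|_p\}$ is attained only in the limit whenever $\|\bm{p}\|_q = \lambda$, so the equality must be argued via a maximizing sequence rather than pointing to a specific optimizer.
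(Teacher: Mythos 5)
Your proposal is correct and follows essentially the same route as the paper: both reduce the inner supremum to its Lagrangian/Wasserstein dual, evaluate $\sup_{\bm{\eta}}\{\bm{\eta}^T\bm{p}-\lambda\|\bm{\eta}\|_p\}$ via the dual norm to obtain the condition $\|\bm{p}\|_q\le\lambda$, and linearize $[\hat{\bm{\xi}}^{iT}\bm{p}-t]^+$ epigraphically; the only difference is that the paper derives the dual explicitly from the conic linear program and verifies strong duality itself (Slater for $\epsilon_N>0$, a separate argument for $\epsilon_N=0$), whereas you invoke the known duality theorem and correctly flag its justification as the remaining technical burden.
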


To prove Theorem \ref{thoe1},  a lemma is introduced below.
\begin{lem}
	\label{lem 1}
	For any $\bm{w}\in \mathbb{R}^n$, it holds that
	\begin{equation}
	\label{lem}
	\sup_{\bm{x}\in \mathbb{R}^n} \left\{\bm{w}^T\bm{x}-\lambda {\| \bm{x} \|}_p \right\} = \sup_{\bm{x}\in \mathbb{R}^n}\left\{ ({\|\bm{w} \|}_q-\lambda)  {\| \bm{x} \|}_p  \right\}.
	\end{equation}
\end{lem}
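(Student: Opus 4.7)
The plan is to prove the identity by reducing both suprema to the same one-dimensional problem over a nonnegative scalar, using the standard duality between the $l_p$-norm and its dual $l_q$-norm.

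First I would introduce the polar decomposition $\bm{x} = r\bm{u}$, where $r = \|\bm{x}\|_p \ge 0$ and $\bm{u} \in \mathbb{R}^n$ satisfies $\|\bm{u}\|_p = 1$ (with the convention that the zero vector corresponds to $r=0$). Substituting on the left-hand side of \eqref{lem} yields
\begin{equation*}
\sup_{\bm{x}\in \mathbb{R}^n}\bigl\{\bm{w}^T\bm{x}-\lambda\|\bm{x}\|_p\bigr\} = \sup_{r\ge 0}\sup_{\|\bm{u}\|_p=1} r\bigl(\bm{w}^T\bm{u} - \lambda\bigr) = \sup_{r\ge 0} r\bigl(\|\bm{w}\|_q-\lambda\bigr),
\end{equation*}
where the second equality invokes the definition of the dual norm, $\|\bm{w}\|_q = \sup_{\|\bm{u}\|_p=1}\bm{w}^T\bm{u}$, together with the fact that $r\ge 0$ allows the inner supremum to be pulled through.

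Next, I would apply the same substitution to the right-hand side of \eqref{lem}. Since $\|\bm{x}\|_p = r$, we have
\begin{equation*}
\sup_{\bm{x}\in \mathbb{R}^n}\bigl\{(\|\bm{w}\|_q-\lambda)\|\bm{x}\|_p\bigr\} = \sup_{r\ge 0} r\bigl(\|\bm{w}\|_q-\lambda\bigr),
\end{equation*}
which matches the expression obtained for the left-hand side. In both cases the common value equals $0$ when $\|\bm{w}\|_q \le \lambda$ and $+\infty$ otherwise.

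There is no real obstacle here: the only subtlety is to ensure the inner supremum is attained (or at least that the supremum of $\bm{w}^T\bm{u}$ over $\|\bm{u}\|_p = 1$ coincides with $\|\bm{w}\|_q$, which is the standard definition of the dual norm), and to handle the zero vector separately in the polar decomposition, which is trivial since the objective vanishes at $\bm{x}=\bm{0}$ and $r=0$ is included in the scalar supremum.
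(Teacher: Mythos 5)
Your proof is correct and follows essentially the same route as the paper's: both decompose the supremum into a radial part and a supremum over a level set of the $l_p$-norm, then invoke the dual-norm identity $\sup_{\|\bm{u}\|_p=1}\bm{w}^T\bm{u}=\|\bm{w}\|_q$ (the paper cites this as a lemma of Zhang et al. in the form $\sup_{\|\bm{x}\|_p=t}\bm{w}^T\bm{x}=t\|\bm{w}\|_q$). Your explicit handling of the zero vector and of the case split ($0$ if $\|\bm{w}\|_q\le\lambda$, $+\infty$ otherwise) is a slight refinement but not a different argument.
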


\begin{proof}
	By Lemma 1 in \citet{zhang2017lagrangian}, the maximum of $\sup_{{\| \bm{x} \|}_p =t}\left\{\bm{w}^T\bm{x}\right\}$ is explicitly given by $t {\| \bm{w} \|}_q $. Then we obtain 
	\begin{equation*}
		\begin{aligned}
			&\sup_{{\bm{x}}\in \mathbb{R}^n}\left\{\bm{w}^T{\bm{x}}-\lambda {\| \bm{x} \|}_p \right\}=\sup_{t\ge 0}\sup_{ \|\bm{x}\|_p =t}\left\{\bm{w}^T\bm{x}-\lambda {\| \bm{x} \|}_p \right\} \\
			&=\sup_{t\ge0}\sup_{ {\| \bm{x} \|}_p =t}\left\{\bm{w}^T\bm{x}-\lambda t\right\} = \sup_{t\ge0}\left\{ {t\| \bm{w} \|}_q -\lambda t\right\}\\
			 &= \sup_{ {\| \bm{x} \|}_p \ge 0}\left\{ ({\|\bm{w} \|}_q-\lambda)  {\| \bm{x} \|}_p  \right\} \\
			 &= \sup_{\bm{x} \in \mathbb{R}^n}\left\{ ({\|\bm{w} \|}_q-\lambda)  {\| \bm{x} \|}_p  \right\},
		\end{aligned}
	\end{equation*}
	which implies (\ref{lem}).
\end{proof}

Now, we are ready to prove Theorem \ref{thoe1}.

\begin{proof}[Proof of Theorem \ref{thoe1}]

	 For any feasible path $\bm{p} \in \mathcal{P}$, it is easily seen from \eqref{Wset} that $\sup_{F\in \mathcal{F}_N}\mathbb{E}_F  \left\{h(\bm{p},t,\bm{\xi})\right\}$ can be obtained by solving the following conic linear program
	\begin{equation}
		\label{eq8}
		\begin{aligned}
			 \maximize_{K({{\bm{\xi}}},~ {\hat{\bm{\xi}}}^i)\ge 0} \ \ \ & \int_{\Xi} \sum\limits_{i=1}^{N}h(\bm{p},t,\bm{\xi})K(\mathrm{d}{{\bm{\xi}}},{\hat{\bm{\xi}}}^i)  \\
			 \st \ \ \ & \int_{\Xi} K(\mathrm{d}{{\bm{\xi}}},{\hat{\bm{\xi}}}^i)=\frac{1}{N}, \  \forall i\in [N] \\
			& \int_{\Xi} \sum\limits_{i=1}^{N}d({{\bm{\xi}}},{\hat{\bm{\xi}}}^i) K(\mathrm{d}{{\bm{\xi}}},{\hat{\bm{\xi}}}^i)\le \epsilon_N.
		\end{aligned}
	\end{equation}
	We introduce a Lagrangian function for (\ref{eq8}), i.e., 
	\begin{equation}
		\begin{aligned}
			\label{eq9}
			L({{\bm{\xi}}},\lambda,\bm{s})= & \int_{\Xi} \sum\limits_{i=1}^{N}h(\bm{p},t,\bm{\xi})K(\mathrm{d}{\bm{\xi}},{\hat{\bm{\xi}}}^i)
			-\int_{\Xi} \sum\limits_{i=1}^{N}s_i K(\mathrm{d}{\bm{\xi}},{\hat{\bm{\xi}}}^i) 
			-\int_{\Xi} \sum\limits_{i=1}^{N}\lambda d({\bm{\xi}},{\hat{\bm{\xi}}}^i) K(\mathrm{d}{\bm{\xi}},{\hat{\bm{\xi}}}^i)
			+\frac{1}{N}\sum\limits_{i=1}^{N}{s_i}+ \lambda \epsilon_N.
		\end{aligned}
	\end{equation}
	It follows that the Lagrange dual function can be represented as
	\begin{align}
		\label{eq10}
		g(\lambda,\bm{s})  =  &\sup_{{{\bm{\xi}}}\in \Xi} L({{\bm{\xi}}},\lambda,\bm{s})  
		 = \sup_{{{\bm{\xi}}}\in \Xi} \int_{\Xi} \sum\limits_{i=1}^{N}\left(h(\bm{p},t,\bm{\xi})-s_i -\lambda  d({\bm{\xi}},{\hat{\bm{\xi}}}^i)\right) K(\mathrm{d}{\bm{\xi}},{\hat{\bm{\xi}}}^i)
		  +\frac{1}{N}\sum\limits_{i=1}^{N}{s_i}+ \lambda \epsilon_N. 	
	\end{align}
	Consequently, the dual problem of (\ref{eq8}) is given as
	\begin{subequations}
		\label{robustSAA}
		\begin{align}
			 \minimize_{\bm{s},\lambda}  \ \ \ & \frac{1}{N}\sum\limits_{i=1}^{N} s_i + \lambda\epsilon_N   \\
			\st \ \ \ & h(\bm{p},t,\bm{\xi})-\lambda d({{\bm{\xi}}},{\hat{\bm{\xi}}}^i) \le s_i, \forall {{\bm{\xi}}} \in \Xi,   i \in [N] \label{eq11b} \\
			& \lambda \ge 0 \label{eq11a}.
		\end{align}
	\end{subequations}
	
	 Consider the primal problem (\ref{eq8}) and its dual problem (\ref{robustSAA}). If $\epsilon_N > 0$, there exists a strictly feasible solution $K = F_N \times F_N$ to (\ref{eq8}). Thus, the Slater condition for their strong duality holds \citep{Shapiro2001On}. If $\epsilon_N = 0$, the Wasserstein ball $\mathcal{F}_N$ reduces to a singleton $\{F_N\}$ and  (\ref{eq8}) changes to a sample average problem $\frac{1}{N}\sum_{i=1}^{N}h(\bm{p},t,\hat{\bm{\xi}}^i)$. Indeed, any feasible solution to the dual problem (\ref{robustSAA}) satisfies that $s_i \ge h(\bm{p},t,\hat{\bm{\xi}}^i)$ with ${\bm{\xi}} = \hat{\bm{\xi}}^i$ and $s_i \ge 0$ with ${\bm{\xi}} \neq \hat{\bm{\xi}}^i$ when $\lambda$ tends to infinity. Accordingly, the optimal value of problem (\ref{robustSAA}) reduces to the sample average problem $\frac{1}{N}\sum_{i=1}^{N}h(\bm{p},t,\hat{\bm{\xi}}^i)$ as well. Overall, there is no duality gap between (\ref{eq8}) and (\ref{robustSAA}) under any case. Thus, it is sufficient to solve the dual problem (\ref{robustSAA}).
	
	Since $h(\bm{p},t,\bm{\xi}) = [{\bm{\xi}}^T\bm{p}-t]^+$, the constraint in (\ref{eq11b}) amounts to
	\begin{align}	
		&\sup_{{\bm{\xi}}\in \Xi} \ \ \left\{{\bm{\xi}}^T\bm{p}-t- \lambda  {\| {{\bm{\xi}}}-{\hat{\bm{\xi}}}^i \|}_p \right \}  \le s_i,  \label{eq12a}  \\
		&\sup_{{\bm{\xi}}\in \Xi} \ \ \left \{ - \lambda  {\| {{\bm{\xi}}}-{\hat{\bm{\xi}}}^i \|}_p \right \} \le s_i.   \label{eq12b}		
	\end{align}
	 The inequality (\ref{eq12b}) implies that $s_i \ge 0$ since $\lambda \ge 0$ and $\hat{\bm{\xi}}^i \in \Xi$. For simplicity, we denote $\Delta \bm{u}_i = {\bm{\xi}}-\hat{\bm{\xi}}^i$ and re-express the left-hand side of (\ref{eq12a}) as

	\begin{equation*}
		\begin{aligned}
			& \sup_{\Delta \bm{u}_i} \left\{\bm{p}^T(\hat{\bm{\xi}}^i+\Delta \bm{u}_i)-t-\lambda {\| \Delta \bm{u}_i \|}_p \right\} \\
			&= \sup_{\Delta \bm{u}_i}\left\{\bm{p}^T\Delta \bm{u}_i-\lambda {\| \Delta \bm{u}_i \|}_p\right\} +\bm{p}^T\hat{\bm{\xi}}^i-t \\
			&= \sup_{\Delta \bm{u}_i} \left\{ ({\| \bm{p} \|}_q - \lambda){\|\Delta \bm{u}_i\|}_p \right\} +\bm{p}^T\hat{\bm{\xi}}^i-t  \\
			& =\left\{	
			\begin{array}{ll}
				\bm{p}^T\hat{\bm{\xi}}^i -t,   &\text{if} \   {\| \bm{p} \|}_q \le \lambda  \\
				+\infty,  &\text{if} \   {\| \bm{p} \|}_q > \lambda  \  \\
			\end{array}
			\right.
		\end{aligned}
	\end{equation*}
    where the second equality follows from Lemma \ref{lem 1}. Then (\ref{eq8}) can be reformulated as
		\begin{equation}
			\begin{aligned}
				\label{eq13}
				\minimize_{\bm{s},\lambda} \ \ \ & \frac{1}{N}\sum\limits_{i=1}^{N} s_i + \lambda\epsilon_N   \\
				\st \ \ \  &\bm{p}^T\hat{\bm{\xi}}^i -t \le s_i ,\ \ s_i\ge 0,\ \  \forall i\in [ N] \\
				& {\| \bm{p} \|}_q  \le \lambda. \ \ \   \\	
			\end{aligned}
		\end{equation}
	Combining (\ref{eq13}) with (\ref{RMETT}) leads to the equivalence of the w-METT and (\ref{eqMETT}). 
	
	Consequently, the DRSP model in (\ref{eq3}) can be reformulated  as a mixed 0-1 convex problem (\ref{UDP}), which completes the proof.	
\end{proof}

Theorem \ref{thoe1} implies that the DRSP model (\ref{eq3}) is equivalent to a finite mixed 0-1 convex program. Note that the program in this paper is a discrete optimization problem which is more difficult to solve than the continuous optimization program in \citet{esfahani2018data}. Furthermore, different $l_p$-norms lead to different equivalent problems of the proposed DRSP model. For example, when we adopt $l_1$-norm or $l_2$-norm in the Wasserstein distance, the DRSP model is a 0-1 LP or 0-1 SOCP problem, respectively. See Table \ref{form} for details.

\subsection{The DRSP Model with the Support Set} \label{boundedsec}

The travel time is finite in practice and thus its support set should not be neglected. In this subsection, we incorporate this information by assuming that the travel time $\xi_{ij}$ belongs to an empirical interval $[a_{ij},b_{ij}]$ where $a_{ij} = \min_{k \in [N]}{\xi}_{ij}^k$ and $b_{ij} = \max_{k\in [N]}{\xi}_{ij}^k$. Then we derive an equivalent formulation for our DRSP model (\ref{eq3}).

Let $\Xi = [\bm{a},\bm{b}]$ in this subsection denote the support set where $\bm{a}=\{a_{ij}:(i,j)\in \mathcal{A}\}$ and $\bm{b}=\{b_{ij}:(i,j)\in \mathcal{A}\}$. Since $\Xi = [\bm{a},\bm{b}]$  is compact, any distribution $F $ in  $\mathcal{F}_N$ given by (\ref{Wset}) in this subsection automatically satisfies Assumption \ref{assum1}. Following this, we derive equivalent problems for w-METT (\ref{RMETT}) and the proposed DRSP model (\ref{eq3}).

\begin{theo}
	\label{thoe3}
	Let $\Xi = [\bm{a},\bm{b}]$, then the w-METT in (\ref{RMETT}) over the Wasserstein ball (\ref{Wset}) can be computed by a finite convex problem
	\begin{subequations}
		\label{eq14}
		\begin{align}
			 \minimize_{t,\bm{s},\lambda,\bm{\gamma}_i,\bm{\eta}_i} \ \ \ & t + \frac{1}{\alpha}\left \{\frac{1}{N}\sum\limits_{i=1}^{N} s_i + \lambda\epsilon_N \right \}  \label{bdpa}\\
			\st \ \ \ &(\bm{p}+{\bm{\gamma}}_i-{\bm{\eta}}_i)^T\hat{\bm{\xi}}^i-{\bm{\gamma}}_i^T\bm{a}+{\bm{\eta}}_i^T\bm{b} -t\le s_i \ \ \label{bdpb} \\
			&  {\| {\bm{\gamma}}_i+\bm{p}-{\bm{\eta}}_i \|}_q  \le \lambda  \label{bdpc} \\
			& {\bm{\eta}}_i\ge 0, {\bm{\gamma}}_i \ge 0, s_i\ge 0, \forall i\in [N] \label{bdpd}.
		\end{align}
	\end{subequations}
	Moreover, the DRSP problem (\ref{eq3}) is re-expressed as
	\begin{equation}
		\label{BDP}
		\begin{aligned}
			 \minimize_{\bm{p},t,\bm{s},\lambda,{\bm{\gamma}}_i,{\bm{\eta}}_i} \ \ \ & t + \frac{1}{\alpha}\left \{\frac{1}{N}\sum\limits_{i=1}^{N} s_i + \lambda\epsilon_N \right \}  \\
			 \st\ \ \ &\bm{p}\in \mathcal{P}, \ (\ref{bdpb}),\ (\ref{bdpc}) \ \text{and} \ (\ref{bdpd}).
		\end{aligned}
	\end{equation}
\end{theo}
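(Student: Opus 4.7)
The plan is to follow the same overall strategy as in the proof of Theorem~\ref{thoe1}, with a single new ingredient: the inner supremum over $\bm{\xi}$ is now taken over the box $\Xi=[\bm{a},\bm{b}]$ instead of $\mathbb{R}^n$, so I must dualize the box constraints before invoking Lemma~\ref{lem 1}.

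First I would repeat the Lagrangian-duality argument verbatim from the proof of Theorem~\ref{thoe1} to arrive at the dual problem \eqref{robustSAA}. Nothing changes there: Assumption~\ref{assum1} is automatic because $\Xi=[\bm{a},\bm{b}]$ is compact, and strong duality is established exactly as before (Slater via $K=F_N\times F_N$ when $\epsilon_N>0$, and the limit-$\lambda\to\infty$ argument when $\epsilon_N=0$). The task is then to rewrite constraint \eqref{eq11b} explicitly. Using $h(\bm{p},t,\bm{\xi})=[\bm{\xi}^T\bm{p}-t]^+$, the single constraint \eqref{eq11b} splits into
\begin{align*}
\sup_{\bm{\xi}\in[\bm{a},\bm{b}]}\bigl\{\bm{\xi}^T\bm{p}-t-\lambda\|\bm{\xi}-\hat{\bm{\xi}}^i\|_p\bigr\}&\le s_i, \\
\sup_{\bm{\xi}\in[\bm{a},\bm{b}]}\bigl\{-\lambda\|\bm{\xi}-\hat{\bm{\xi}}^i\|_p\bigr\}&\le s_i,
\end{align*}
and the second is equivalent to $s_i\ge 0$ because $\lambda\ge 0$ and $\hat{\bm{\xi}}^i\in\Xi$.

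Next I would dualize the box constraints in the remaining supremum. Since the objective $\bm{p}^T\bm{\xi}-t-\lambda\|\bm{\xi}-\hat{\bm{\xi}}^i\|_p$ is concave in $\bm{\xi}$ and the feasible set $\{\bm{\xi}:\bm{\xi}-\bm{a}\ge 0,\;\bm{b}-\bm{\xi}\ge 0\}$ has nonempty relative interior, strong Lagrangian duality applies. Introducing multipliers $\bm{\gamma}_i\ge 0$ for $\bm{\xi}\ge\bm{a}$ and $\bm{\eta}_i\ge 0$ for $\bm{\xi}\le\bm{b}$, the first inequality above holds iff there exist $\bm{\gamma}_i,\bm{\eta}_i\ge 0$ with
\begin{equation*}
\sup_{\bm{\xi}\in\mathbb{R}^n}\Bigl\{(\bm{p}+\bm{\gamma}_i-\bm{\eta}_i)^T\bm{\xi}-t-\lambda\|\bm{\xi}-\hat{\bm{\xi}}^i\|_p\Bigr\}-\bm{\gamma}_i^T\bm{a}+\bm{\eta}_i^T\bm{b}\le s_i.
\end{equation*}
Substituting $\Delta\bm{u}_i=\bm{\xi}-\hat{\bm{\xi}}^i$ to separate the linear piece from the norm piece, the unconstrained supremum on the left becomes $(\bm{p}+\bm{\gamma}_i-\bm{\eta}_i)^T\hat{\bm{\xi}}^i+\sup_{\Delta\bm{u}_i}\{(\bm{p}+\bm{\gamma}_i-\bm{\eta}_i)^T\Delta\bm{u}_i-\lambda\|\Delta\bm{u}_i\|_p\}$. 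Applying Lemma~\ref{lem 1} with $\bm{w}=\bm{p}+\bm{\gamma}_i-\bm{\eta}_i$, this trailing supremum equals $0$ when $\|\bm{p}+\bm{\gamma}_i-\bm{\eta}_i\|_q\le\lambda$ and $+\infty$ otherwise, which is exactly the content of constraints~\eqref{bdpc}, and what remains reproduces \eqref{bdpb} and \eqref{bdpd}.

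Combining the resulting reformulation with the outer $\min_t\{t+\tfrac1\alpha(\cdot)\}$ from \eqref{RMETT} yields the w-METT formula \eqref{eq14}, and appending $\bm{p}\in\mathcal{P}$ gives the DRSP reformulation \eqref{BDP}. The one step that merits care, and which I expect to be the main technical point, is the justification of strong duality for the inner constrained supremum: the objective is concave (a linear term minus a norm) and the constraints are linear with a nonempty interior, so Slater's condition holds for the convex program $\min\{-\bm{p}^T\bm{\xi}+t+\lambda\|\bm{\xi}-\hat{\bm{\xi}}^i\|_p\}$ over the box, which is exactly what licenses introducing $(\bm{\gamma}_i,\bm{\eta}_i)$ without any duality gap and closes the argument.
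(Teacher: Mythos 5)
Your proposal is correct and follows essentially the same route as the paper's proof: reduce to the dual problem \eqref{robustSAA}, split constraint \eqref{eq11b} into the two suprema, dualize the box constraints with multipliers $\bm{\gamma}_i,\bm{\eta}_i\ge 0$, and apply Lemma~\ref{lem 1} to the resulting unconstrained supremum in $\Delta\bm{u}_i$. Your explicit Slater-type justification of strong duality for the inner box-constrained supremum is slightly more detailed than the paper's remark that ``the strong duality holds as the uncertainty set is non-empty,'' but the argument is the same.
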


\begin{proof}	
	The strong duality still holds for the inner largest expectation in (\ref{RMETT}), which allows us to reformulate problem (\ref{RMETT}) as
	\begin{subequations}
	\begin{align}
		 \minimize_{t,\bm{s},\lambda} \ \ \ & t+ \frac{1}{\alpha}\left\{\frac{1}{N}\sum\limits_{i=1}^{N} s_i + \lambda\epsilon_N \right\}\\
		 \st \ \ \ & h(\bm{p},t,\bm{\xi})- \lambda d({{\bm{\xi}}},{\hat{\bm{\xi}}}^i)  \le s_i, \forall {{\bm{\xi}}} \in \Xi, i\in [N]  \label{eq_fun}\\
		&  \lambda \ge 0.	
	\end{align}
	\end{subequations}
	The constraint in (\ref{eq_fun}) can be represented as
	\begin{align}
		&\sup_{{\bm{\xi}}\in \Xi} \left\{ {\bm{\xi}}^T\bm{p}-t- \lambda  {\| {\bm{\xi}}-\hat{\bm{\xi}}^i\|}_p\right\}  \le s_i \label{eq24a}\\
		&\sup_{{\bm{\xi}}\in \Xi} \left\{ - \lambda  {\| {{\bm{\xi}}}-{\hat{\bm{\xi}}}^i \|}_p\right\}  \le s_i,  \label{eq24b}	
	\end{align}
	where the inequality in (\ref{eq24b}) implies that $s_i \ge 0$.

	Note that Lemma \ref{lem 1} in Section \ref{nosupport} cannot be applied directly to (\ref{eq24a}). We utilize the Lagrange dual function to solve this problem. For brevity, we denote $\Delta \bm{u}_i = {\bm{\xi}}-\hat{\bm{\xi}}^i$ and express the Lagrangian function of $\sup_{{\bm{\xi}}\in \Xi} \left\{ {\bm{\xi}}^T\bm{p}-t- \lambda  {\| {{\bm{\xi}}}-{\hat{\bm{\xi}}}^i \|}_p \right\}$ as
	\begin{equation}
		\begin{aligned}
			\label{eq25}
			L(\Delta \bm{u}_i,{\bm{\gamma}}_i,{\bm{\eta}}_i)
			 = & (\bm{p}+{\bm{\gamma}}_i-{\bm{\eta}}_i)^T(\Delta \bm{u}_i+\hat{\bm{\xi}}^i)-\lambda \| {\Delta \bm{u}_i \|}_p 
			  -{\bm{\gamma}}_i^T\bm{a}+{\bm{\eta}}_i^T\bm{b} -t .
		\end{aligned}
	\end{equation}
	Then, the Lagrange dual function of (\ref{eq25}) is given by
	\begin{equation*}
		\begin{aligned}
			\label{eq19}
			g({\bm{\gamma}}_i) & =\sup_{\Delta u_i} L(\Delta \bm{u}_i,{\bm{\gamma}}_i,{\bm{\eta}}_i) \\
			& =\sup_{\Delta\bm{u}_i}\left\{(\bm{p}+{\bm{\gamma}}_i-{\bm{\eta}}_i)^T\Delta \bm{u}_i -\lambda \| {\Delta \bm{u}_i \|}_p\right\}+(\bm{p}+{\bm{\gamma}}_i-{\bm{\eta}}_i)^T\hat{\bm{\xi}}^i 
			-{\bm{\gamma}}_i^Ta+{\bm{\eta}}_i^Tb   -t\\
			& =\sup_{\Delta \bm{u}_i} \left\{ ({\| \bm{p}+{\bm{\gamma}}_i-{\bm{\eta}}_i \|}_q-\lambda ) \| {\Delta \bm{u}_i \|}_p\right\} +(\bm{p}+{\bm{\gamma}}_i-{\bm{\eta}}_i)^T\hat{\bm{\xi}}^i 
			 -{\bm{\gamma}}_i^T\bm{a}+{\bm{\eta}}_i^T\bm{b}  -t\\
			&=\left\{
			\begin{array}{ll}
				+\infty,&\text{if} \  {\| \bm{p}+{\bm{\gamma}}_i-{\bm{\eta}}_i \|}_q > \lambda   \\
				(\bm{p}+{\bm{\gamma}}_i-{\bm{\eta}}_i)^T\hat{\bm{\xi}}^i-{\bm{\gamma}}_i^T\bm{a}+{\bm{\eta}}_i^T\bm{b} -t, &\text{if} \  {\| \bm{p}+{\bm{\gamma}}_i-{\bm{\eta}}_i \|}_q \le \lambda
			\end{array}
			\right.
		\end{aligned}
	\end{equation*}
	where the third equality follows from Lemma \ref{lem 1}. 
	
	Consequently, $\sup_{{\bm{\xi}}\in \Xi}\left\{ \ {\bm{\xi}}^T\bm{p}-t- \lambda  {\| {{\bm{\xi}}}-{\hat{\bm{\xi}}}^i \|}_p \right\}$ admits an equivalent problem
	\begin{equation}
		\label{METTboundeq}
		\begin{aligned}
			 \minimize_{{\bm{\gamma}}_i,{\bm{\eta}}_i} \ \ \ & (\bm{p}+{\bm{\gamma}}_i-{\bm{\eta}}_i)^T\hat{\bm{\xi}}^i-{\bm{\gamma}}_i^T\bm{a}+{\bm{\eta}}_i^T\bm{b} -t \\
			 \st \ \ \  & {\| \bm{p}+{\bm{\gamma}}_i-{\bm{\eta}}_i \|}_q  \le \lambda \\
			&  {\bm{\gamma}}_i \ge 0, \ \ {\bm{\eta}}_i \ge 0, \\
		\end{aligned}
	\end{equation}
	where the strong duality holds as the uncertainty set is non-empty. 
	
	Substituting (\ref{METTboundeq}) into constraints in (\ref{eq_fun}), we obtain that the w-METT (\ref{RMETT}) can be computed by the program (\ref{eq14}) and the DRSP model eventually is given by (\ref{BDP}).
\end{proof}

We also summarize the results in Theorem \ref{thoe3} in  Table \ref{form}.  {Different from the NP-hard moment-based DRSP model with support set in \citet{Zhang2017Robust}, Table \ref{form} shows that the Wasserstein distance with $l_1$-norm, $l_2$-norm and $l_\infty$-norm leads to a tractable problem for our DRSP model, e.g., the mixed 0-1 LP or SOCP problem.}
\subsection{Asymptotic consistency}\label{Asy-cons}
We finally discuss the asymptotic consistency of our DRSP model under the following mild condition.
\begin{assum}
	\label{assum2}
	For the true distribution $F$, there exists a constant $c > 1$ such that 
	$$ \int_{\Xi}\exp(\| {\bm{\xi}} \|^c_p) F(\mathrm{d}{\bm{\xi}}) \le \infty $$	
\end{assum}
{Under Assumption \ref{assum2}, the asymptotic consistency of our model can be formalized below.}

\begin{theo}
	\label{asy_con}
	{Under Assumption \ref{assum2} and let $\beta_N \in (0,1)$ with $\sum_{N=1}^{\infty}\beta_N \le \infty$. Define the radius $\epsilon_N$ of the Wasserstein ball as 
		\begin{equation*}
		\epsilon_N(\beta_N)=\left\{\begin{array}{ll}
		\left(\frac{\log(c_1\beta_N^{-1})}{c_2N}\right)^{1/\max\{n,2\}}  & \rm{if} \ \  N \ge \frac{\log(c_1\beta_N^{-1})}{c_2}  \\
		\left(\frac{\log(c_1\beta_N^{-1})}{c_2N}\right)^{1/c} & \rm{if} \ \  N < \frac{\log(c_1\beta_N^{-1})}{c_2}, 
		\end{array}
		\right.
		\end{equation*}	
		where $c_1$ and $c_2$ are two positive constants that depend on the constant $c$ in Assumption \ref{assum2}.}
		
		{Then the optimal value and the optimal solution of the DRSP model (\ref{eq3}) converge to those of the SSP model (\ref {SSP}) with probability one as $N$ tends to infinity.}
\end{theo}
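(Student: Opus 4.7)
The plan is to follow the standard measure-concentration approach pioneered in Esfahani and Kuhn (2018). The argument splits into three ingredients: a tail bound saying $F\in\mathcal{F}_N$ with high probability, a Borel--Cantelli step making this an almost-sure eventual containment, and a pair of one-sided inequalities $\liminf_N J_N^\star\ge J^\star$ and $\limsup_N J_N^\star\le J^\star$ for the DRSP and SSP optimal values $J_N^\star$ and $J^\star$. Solution convergence will then come essentially for free from the finiteness of $\mathcal{P}$.

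First, I would invoke the Fournier--Guillin concentration inequality, which under the exponential moment bound in Assumption \ref{assum2} yields a non-asymptotic estimate of the form $\mathbb{P}(d_W(F,F_N)\ge \epsilon)\le c_1\exp(-c_2 N\epsilon^{\max\{n,2\}})$ for small $\epsilon$ and a correspondingly weaker tail for large $\epsilon$. Inverting this bound at confidence level $\beta_N$ reproduces exactly the two-piece formula defining $\epsilon_N(\beta_N)$, so $\mathbb{P}(F\in\mathcal{F}_N)\ge 1-\beta_N$. Because $\sum_N\beta_N<\infty$, the Borel--Cantelli lemma guarantees that the events $\{F\notin\mathcal{F}_N\}$ occur only finitely often on a probability-one set; thus, almost surely there exists a random $N_0$ with $F\in\mathcal{F}_N$ for all $N\ge N_0$. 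On that event, $\text{w-METT}_\alpha(\bm{p})\ge {\rm METT}_\alpha(\bm{p})$ for every feasible $\bm{p}$, whence $J_N^\star\ge J^\star$ and $\liminf_N J_N^\star\ge J^\star$ a.s.

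Second, and this is where the real work lies, I would establish $\limsup_N J_N^\star\le J^\star$. Let $\bm{p}^\circ$ be any SSP-optimal path. The reformulations in Theorem \ref{thoe1} (or Theorem \ref{thoe3} in the bounded-support case) yield an explicit bound of the form
\begin{equation*}
\text{w-METT}_\alpha(\bm{p}^\circ)\le \min_{t\in\mathbb{R}}\Bigl\{t+\tfrac{1}{\alpha}\mathbb{E}_{F_N}[h(\bm{p}^\circ,t,\bm{\xi})]\Bigr\}+\tfrac{\epsilon_N}{\alpha}\|\bm{p}^\circ\|_q,
\end{equation*}
obtained by plugging $\lambda=\|\bm{p}^\circ\|_q$ (and $\bm{\gamma}_i=\bm{\eta}_i=0$ in the bounded case) into the feasibility region. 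Since $\bm{p}^\circ\in\{0,1\}^n$ gives $\|\bm{p}^\circ\|_q$ uniformly bounded in $N$, and since $\epsilon_N\to 0$ (visible directly from the defining formula because $\log(c_1\beta_N^{-1})/N\to 0$ under $\sum_N\beta_N<\infty$), the perturbation term vanishes. Meanwhile, the inner minimum over $t$ at the empirical distribution converges almost surely to ${\rm METT}_\alpha(\bm{p}^\circ)=J^\star$ by the uniform strong law of large numbers applied to the integrable, piecewise-linear family $\{h(\bm{p}^\circ,t,\cdot)\}_{t\in K}$ on any compact window $K$ containing the minimizers (Assumption \ref{assum2} guarantees integrability and confines the argmin to a bounded set). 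Hence $\limsup_N J_N^\star\le \limsup_N\text{w-METT}_\alpha(\bm{p}^\circ)\le J^\star$ a.s., and combined with the earlier inequality, $J_N^\star\to J^\star$ a.s.

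Third, for the optimal-solution convergence, I would exploit that $\mathcal{P}$ is a finite set because the decisions are binary on a finite arc set. Let $\mathcal{P}^\star\subseteq\mathcal{P}$ collect the SSP-optimal paths and set $\delta:=\min_{\bm{p}\notin\mathcal{P}^\star}{\rm METT}_\alpha(\bm{p})-J^\star>0$. The one-sided bounds derived above apply to each fixed $\bm{p}\in\mathcal{P}$ individually, so almost surely $\text{w-METT}_\alpha(\bm{p})\to {\rm METT}_\alpha(\bm{p})$ for every $\bm{p}\in\mathcal{P}$. Because $\mathcal{P}$ is finite this convergence is uniform over $\mathcal{P}$ on a probability-one set, so for all $N$ sufficiently large every $\bm{p}\notin\mathcal{P}^\star$ has $\text{w-METT}_\alpha(\bm{p})>J^\star+\delta/2$ while every $\bm{p}\in\mathcal{P}^\star$ has $\text{w-METT}_\alpha(\bm{p})<J^\star+\delta/3$; any DRSP optimum therefore belongs to $\mathcal{P}^\star$, yielding the claimed solution convergence.

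The main obstacle is the $\limsup$ direction: one must simultaneously control the closing of the ambiguity ball (via the explicit $O(\epsilon_N)$ perturbation that the Wasserstein reformulation makes transparent) and the empirical-to-true expectation gap (via SLLN), while making sure the argmin over $t$ in the definition of ${\rm METT}_\alpha$ stays in a compact set so that SLLN applies uniformly; Assumption \ref{assum2} is exactly what is needed to rule out pathological tails and guarantee this.
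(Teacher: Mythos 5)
Your proposal is essentially a correct, self-contained re-derivation of the result, but it takes a genuinely different route from the paper. The paper's proof is a two-line verification of hypotheses: it observes that $h(\bm{p},t,\bm{\xi})$ is continuous in $\bm{\xi}$ and satisfies a linear growth bound $|h(\bm{p},t,\bm{\xi})|\le L(1+\|\bm{\xi}\|)$, and then invokes Theorem~3.6 of \citet{esfahani2018data} as a black box. You instead reconstruct the machinery behind that theorem: the Fournier--Guillin concentration bound whose inversion produces the two-piece formula for $\epsilon_N(\beta_N)$, the Borel--Cantelli step giving almost-sure eventual containment $F\in\mathcal{F}_N$ (hence $\liminf_N J_N^\star\ge J^\star$), and the $\limsup$ direction via the explicit $\tfrac{\epsilon_N}{\alpha}\|\bm{p}^\circ\|_q$ perturbation read off from the reformulation in Theorem~\ref{thoe1} combined with a uniform SLLN for the empirical CVaR. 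Your solution-convergence argument is actually simpler and sharper than the generic one: since $\mathcal{P}$ is finite, pointwise convergence of $\text{w-METT}_\alpha(\bm{p})$ over $\mathcal{P}$ is automatically uniform, and a fixed optimality gap $\delta>0$ forces every DRSP optimizer into $\mathcal{P}^\star$ for large $N$. What the paper's citation buys is brevity and correctness-by-reference; what your argument buys is transparency about exactly where Assumption~\ref{assum2}, the radius formula, and the summability of $\beta_N$ enter.

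One concrete flaw: your parenthetical claim that $\epsilon_N\to 0$ is ``visible directly from the defining formula because $\log(c_1\beta_N^{-1})/N\to 0$ under $\sum_N\beta_N<\infty$'' is false in general. Summability of $\beta_N$ does not control the decay rate from below: for $\beta_N=e^{-N^2}$ one has $\sum_N\beta_N<\infty$ yet $\log(c_1\beta_N^{-1})/N\to\infty$, so $\epsilon_N(\beta_N)\not\to 0$ and your perturbation term does not vanish. Theorem~3.6 of \citet{esfahani2018data} assumes $\epsilon_N\to 0$ as a separate hypothesis precisely for this reason (the theorem as stated in the paper inherits the same imprecision). To close the gap you should either add $\epsilon_N\to 0$ as an explicit assumption or restrict $\beta_N$ so that $\log(\beta_N^{-1})=o(N)$. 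The remaining steps, including the compactness argument confining the argmin over $t$ so that the uniform SLLN applies, are sound at the level of detail given.
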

\begin{proof}
	{Since $h(\bm{p},t,\bm{\xi})$ is continuous in ${\bm{\xi}}$ and there exists $L \ge 0$ with $|~h(\bm{p},t,\bm{\xi})~| \le L(1+\| {\bm{\xi}} \|)$ for all $\bm{p} \in \mathcal{P}, t\in \mathbb{R}$ and ${\bm{\xi}} \in \Xi$, then the asymptotic consistency of our DRSP model follows from Theorem $3.6$ in \citet{esfahani2018data}.}
\end{proof}

\section{The Worst-case Distribution Achieving the w-METT} \label{worst-dis}
{In this section we derive the worst-case distribution that attains the supremum of the w-METT in \eqref{RMETT} for any feasible pair of $\{\bm{p},t\}$, i.e., $\bm{p}\in\mathcal{P}$ and $t\in\mathbb{R}$.}

\begin{lem}
	{\label{thm_dis}Given any feasible pair of $\{\bm{p},t\}$, then $\sup_{F\in \mathcal{F}_N}\mathbb{E}_F \{h(\bm{p},t,\bm{\xi})\}$  is equivalent to 
	\begin{equation}
		\label{worst-case-B}
		\sup_{\tilde{\bm{\xi}}\in \mathcal{B}}\left\{\frac{1}{N}\sum_{i=1}^{N}h(\bm{p},t,\bm{\xi}^{(i)})\right\}
	\end{equation}
	where 
	\begin{equation*}
		\label{setB}
		\mathcal{B} = \left\{(\xi^{(1)},\dots,\xi^{(N)})~|~ \frac{1}{N}\sum_{i=1}^{N}d(\bm{\xi^{(i)}},\hat{\bm{\xi}}^{i})\le\epsilon_N,\  \xi^{(i)}\in \Xi\right\}.
	\end{equation*}}
\end{lem}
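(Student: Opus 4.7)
The plan is to establish the claimed equality by proving both inequalities; denote the two sides by LHS and RHS.

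\textbf{Easy direction} (RHS $\leq$ LHS): I exhibit, for each tuple $\tilde{\bm{\xi}}=(\bm{\xi}^{(1)},\ldots,\bm{\xi}^{(N)})\in\mathcal{B}$, a discrete distribution in $\mathcal{F}_N$ that matches the corresponding objective value. Take $F_{\tilde{\bm{\xi}}}:=\frac{1}{N}\sum_{i=1}^{N}\delta_{\bm{\xi}^{(i)}}$ together with the coupling $K=\frac{1}{N}\sum_{i=1}^{N}\delta_{\bm{\xi}^{(i)}}\otimes\delta_{\hat{\bm{\xi}}^{i}}$: its marginals are $F_{\tilde{\bm{\xi}}}$ and $F_N$, and its transport cost equals $\frac{1}{N}\sum_i d(\bm{\xi}^{(i)},\hat{\bm{\xi}}^{i})\le\epsilon_N$, certifying $F_{\tilde{\bm{\xi}}}\in\mathcal{F}_N$. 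Since $\mathbb{E}_{F_{\tilde{\bm{\xi}}}}[h(\bm{p},t,\bm{\xi})]=\frac{1}{N}\sum_i h(\bm{p},t,\bm{\xi}^{(i)})$, taking the supremum over $\mathcal{B}$ yields RHS $\leq$ LHS.

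\textbf{Reverse direction} (LHS $\leq$ RHS): I plan to recycle the strong-duality identity derived in the proof of Theorem~\ref{thoe1},
\[
\sup_{F\in \mathcal{F}_N}\mathbb{E}_F\{h(\bm{p},t,\bm{\xi})\}=\inf_{\lambda\ge 0}\left\{\lambda\epsilon_N+\frac{1}{N}\sum_{i=1}^{N}\sup_{\bm{\xi}\in\Xi}\bigl[h(\bm{p},t,\bm{\xi})-\lambda d(\bm{\xi},\hat{\bm{\xi}}^{i})\bigr]\right\}.
\]
Since the inner suprema are decoupled over independent variables $\bm{\xi}^{(i)}\in\Xi$, this rewrites as $\inf_{\lambda\ge 0}\sup_{\tilde{\bm{\xi}}\in\Xi^N}\Phi(\tilde{\bm{\xi}},\lambda)$ with $\Phi:=\lambda\epsilon_N+\frac{1}{N}\sum_i[h(\bm{p},t,\bm{\xi}^{(i)})-\lambda d(\bm{\xi}^{(i)},\hat{\bm{\xi}}^{i})]$. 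Swapping $\inf$ and $\sup$, the inner infimum in $\lambda\ge 0$ is linear in $\lambda$ and evaluates to $\frac{1}{N}\sum_i h(\bm{p},t,\bm{\xi}^{(i)})$ precisely when $\tilde{\bm{\xi}}\in\mathcal{B}$ (attained at $\lambda=0$) and to $-\infty$ otherwise, producing exactly RHS.

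\textbf{Main obstacle}: justifying the inf-sup interchange, since $\Phi$ is linear in $\lambda$ but convex rather than concave in each $\bm{\xi}^{(i)}$, putting the problem outside the direct reach of Sion's minimax theorem. I would close this gap by extracting an optimal dual multiplier $\lambda^{\ast}$ of the outer infimum together with per-sample maximizers $\bm{\xi}^{\ast(i)}\in\arg\max_{\bm{\xi}\in\Xi}[h(\bm{p},t,\bm{\xi})-\lambda^{\ast} d(\bm{\xi},\hat{\bm{\xi}}^{i})]$, and then invoking complementary slackness on the transport-budget constraint to verify $\tilde{\bm{\xi}}^{\ast}\in\mathcal{B}$ and that $\frac{1}{N}\sum_i h(\bm{p},t,\bm{\xi}^{\ast(i)})$ equals the outer optimal value, which simultaneously produces an optimal tuple for the RHS and closes the proof.
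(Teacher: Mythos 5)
Your first direction (RHS $\le$ LHS) is correct and complete: exhibiting the discrete measure $\frac{1}{N}\sum_{i}\delta_{\bm{\xi}^{(i)}}$ together with the product coupling $\frac{1}{N}\sum_{i}\delta_{\bm{\xi}^{(i)}}\otimes\delta_{\hat{\bm{\xi}}^{i}}$ certifies membership in $\mathcal{F}_N$ straight from the definition of $d_W$. This is a different and more direct route than the paper, which gets this inequality by weak Lagrangian duality against program (\ref{robustSAA}); your construction is in fact the one the paper itself deploys later in the proof of Theorem~\ref{worst-case-dis-1}.

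The reverse direction is where the genuine gap lies, and you have located it correctly: everything hinges on interchanging $\inf_{\lambda\ge 0}$ with $\sup_{\tilde{\bm{\xi}}\in\Xi^N}$. Your proposed repair does not close it. The complementary-slackness step fails because, by Danskin's theorem, the subdifferential of $g(\lambda)=\lambda\epsilon_N+\frac{1}{N}\sum_i\sup_{\bm{\xi}\in\Xi}\bigl[h(\bm{p},t,\bm{\xi})-\lambda d(\bm{\xi},\hat{\bm{\xi}}^i)\bigr]$ at $\lambda^{\ast}$ is the \emph{convex hull} of the slopes $\epsilon_N-\frac{1}{N}\sum_i d(\bm{\xi}^{\ast(i)},\hat{\bm{\xi}}^i)$ over all selections of maximizers; having $0$ in this convex hull does not produce a single selection with a tight budget. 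Worse, the interchange is genuinely invalid here, so the claimed identity itself can fail: take $N=1$, a single arc with $\bm{p}=1$, $\hat{\bm{\xi}}^1=0$, $\Xi=\mathbb{R}$, $\epsilon_1=1$, $t=1.5$. Then $\mathcal{B}=[-1,1]$, so the right-hand side is $\sup_{|\xi|\le 1}[\xi-1.5]^+=0$, whereas $F_M=(1-\tfrac{1}{M})\delta_0+\tfrac{1}{M}\delta_M$ has Wasserstein distance exactly $1$ from $\delta_0$ and gives $\mathbb{E}_{F_M}\{h\}=1-1.5/M\to 1$, so the left-hand side equals $1$. The obstruction is that an atom of mass $1/N$ cannot be split; the correct discrete reformulation (cf.\ Theorem~4.4 of \citet{esfahani2018data}) attaches to each sample one perturbed point per affine piece of $h$ with optimized sub-weights, which is strictly richer than $\mathcal{B}$. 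You should also know that the paper's own proof founders on exactly the same step: the assertion that there exist $\lambda$-independent $\varepsilon$-maximizers $\tilde{\bm{\xi}}^{(i)}$ satisfying (\ref{contradict}) is precisely the unjustified inf--sup swap. So your instinct that this is "the main obstacle" is right, but it cannot be surmounted as stated; the lemma needs either the split-atom ambiguity set in place of $\mathcal{B}$ or additional hypotheses under which $h(\bm{p},t,\cdot)$ behaves affinely on the relevant region.
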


\begin{proof}
	{Fix any solution $\{\bm{p},t\}$, it follows from the weak duality that
	\begin{equation*}
		\begin{aligned}
			\sup_{\tilde{\bm{\xi}}\in \mathcal{B}}\left\{\frac{1}{N}\sum_{i=1}^{N}h(\bm{p},t,\bm{\xi}^{(i)})\right\} &\le \inf_{\lambda\ge 0}\sup_{\bm{\xi}^{(i)}\in \Xi}\left\{\frac{1}{N}\sum_{i=1}^{N}h(\bm{p},t,\bm{\xi}^{(i)})-\lambda\left(\frac{1}{N}\sum_{i=1}^{N}d(\bm{\xi^{(i)}},\hat{\bm{\xi}}^{i})-\epsilon_N\right)\right\}\\
			& = \inf_{\lambda \ge 0}\left\{\lambda\epsilon_N + \frac{1}{N}\sum_{i=1}^{N}\sup_{\bm{\xi}^{(i)}\in \Xi}\left\{h(\bm{p},t,\bm{\xi}^{(i)})-\lambda d(\bm{\xi^{(i)}},\hat{\bm{\xi}}^{i}) \right\}\right\}\\
			& = \sup_{F\in \mathcal{F}_N}\mathbb{E}_F \left\{h(\bm{p},t,\bm{\xi})\right\}
		\end{aligned}	
	\end{equation*}
	where the last equality follows from the program (\ref{robustSAA}) in the proof of Theorem \ref{thoe1}. This implies that $\sup_{F\in \mathcal{F}_N}\mathbb{E}_F \{h(\bm{p},t,\bm{\xi})\}$ is greater than (\ref{worst-case-B}).}
	
	{Next we show that $\sup_{F\in \mathcal{F}_N}\mathbb{E}_F \{h(\bm{p},t,\bm{\xi})\}$ is also less than (\ref{worst-case-B}). For any $\varepsilon \ge 0$, it follows from the equivalent program (\ref{robustSAA}) that there exists $\{\tilde{\bm{\xi}}^{(i)}\}_{i\in[N]}\subseteq \Xi$ such that
	\begin{equation}\label{contradict}
		\begin{aligned}
			\sup_{F\in \mathcal{F}_N}\mathbb{E}_F \{h(\bm{p},t,\bm{\xi})\}-\varepsilon < \inf_{\lambda \ge 0}\left\{\lambda\epsilon_N + \frac{1}{N}\sum_{i=1}^{N}\left\{h(\bm{p},t,\tilde{\bm{\xi}}^{(i)})-\lambda d(\tilde{\bm{\xi}}^{(i)},\hat{\bm{\xi}}^{i})\right\} \right\}	.
		\end{aligned}
	\end{equation}
	If $\left(\tilde{\bm{\xi}}^{(1)},\dots,\tilde{\bm{\xi}}^{(N)}\right) \notin \mathcal{B}$ and let $\lambda > 0 $, it holds that  
	\begin{equation*}
		\lambda\left\{\epsilon_N-\frac{1}{N} \sum_{i=1}^{N}d(\tilde{\bm{\xi}}^{(i)},\hat{\bm{\xi}}^{i})\right\} < 0.
	\end{equation*}
	If $\lambda$ tends to $+\infty$ in \eqref{contradict}, it leads to $\sup_{F\in \mathcal{F}_N}\mathbb{E}_F \{h(\bm{p},t,\bm{\xi})\} = -\infty$. This contradicts the fact that $\sup_{F\in \mathcal{F}_N}\mathbb{E}_F \{h(\bm{p},t,\bm{\xi})\} \ge \mathbb{E}_{F_N} \{h(\bm{p},t,\bm{\xi})\} > -\infty$. Thus, $\left(\tilde{\bm{\xi}}^{(1)},\dots,\tilde{\bm{\xi}}^{(N)}\right) \in \mathcal{B}$ and 
	\begin{equation*}
		\begin{aligned}
			\sup_{F\in \mathcal{F}_N}\mathbb{E}_F \{h(\bm{p},t,\bm{\xi})\}-\varepsilon < \inf_{\lambda\ge 0}\sup_{\tilde{\bm{\xi}}\in \mathcal{B}}\left\{\lambda\epsilon_N + \frac{1}{N}\sum_{i=1}^{N}\left\{h(\bm{p},t,\tilde{\bm{\xi}}^{(i)})-\lambda d(\tilde{\bm{\xi}}^{(i)},\hat{\bm{\xi}}^{i})\right\} \right\}
		\end{aligned}
	\end{equation*}
	Since for any $\left({\bm{\xi}}^{(1)},\dots,{\bm{\xi}}^{(N)}\right) \in \mathcal{B}$ it holds that 
	\begin{equation*}
		\begin{aligned}
			\lambda\left\{\epsilon_N- \frac{1}{N}\sum_{i=1}^{N}d({\bm{\xi}}^{(i)},\hat{\bm{\xi}}^{i})\right\} \ge 0,
		\end{aligned}
	\end{equation*}
	it implies that
	\begin{equation*}
		\begin{aligned}
			\sup_{F\in \mathcal{F}_N}\mathbb{E}_F \{h(\bm{p},t,\bm{\xi})\}-\varepsilon < \sup_{\tilde{\bm{\xi}}\in \mathcal{B}}\left\{\frac{1}{N}\sum_{i=1}^{N}\left\{h(\bm{p},t,{\bm{\xi}}^{(i)})\right\} \right\}
		\end{aligned}
	\end{equation*}
	Letting $\varepsilon$ tend to zero leads to the desired inequality and hence we obtain that 
	\begin{equation*}
		\begin{aligned}
			\sup_{F\in \mathcal{F}_N}\mathbb{E}_F \{h(\bm{p},t,\bm{\xi})=\sup_{\tilde{\bm{\xi}}\in \mathcal{B}}\left\{\frac{1}{N}\sum_{i=1}^{N}h(\bm{p},t,\bm{\xi}^{(i)})\right\}.
		\end{aligned}
	\end{equation*}}
\end{proof}
{Note that the objective function of (\ref{worst-case-B}) is continuous and its feasible set $\mathcal{B}$ is compact, the  supremum problem admits an optimal solution. That is,  ``sup" can be replaced by ``max". Since Lemma \ref{thm_dis} implies the equivalence of the optimal value of (\ref{worst-case-B}) and $\sup_{F\in \mathcal{F}_N}\mathbb{E}_F \{h(\bm{p},t,\bm{\xi})\}$ for any feasible pair of $\{\bm{p},t\}$, we can construct a worst-case distribution based on the optimal solution of (\ref{worst-case-B}).}

\begin{theo}
	{\label{worst-case-dis-1}
	Given any feasible pair of $\{\bm{p},t\}$, let $\bm{\xi}_{\{\bm{p},t\}}=\left(\bm{\xi}^{(1)}_{\{\bm{p},t\}},\dots,\bm{\xi}^{(N)}_{\{\bm{p},t\}}\right)$ be an optimal solution of the optimization problem in Lemma \ref{thm_dis}. Then the following distribution 
	\begin{equation*}
		\begin{aligned}
			F^*_{\{\bm{p},t\}} = \frac{1}{N}\sum_{i=1}^{N}\delta_{\bm{\xi}^{(i)}_{\{\bm{p},t\}}}
		\end{aligned}
	\end{equation*}
	is the worst-case distribution, i.e., 
	\begin{equation*}
		\begin{aligned}
			\sup_{F\in \mathcal{F}_N}\mathbb{E}_F\left\{h(\bm{p},t,{\bm{\xi}})\right\} = \mathbb{E}_{F^*_{\{\bm{p},t\}}}\left\{h(\bm{p},t,{\bm{\xi}})\right\}.
		\end{aligned}
	\end{equation*}}
\end{theo}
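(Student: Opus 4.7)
The plan is to verify two things: first, that the candidate distribution $F^*_{\{\bm{p},t\}}$ lies inside the Wasserstein ball $\mathcal{F}_N$, so it is a legitimate competitor in the supremum on the left-hand side; second, that the expected value of $h(\bm{p},t,\bm{\xi})$ under $F^*_{\{\bm{p},t\}}$ already matches the value on the right-hand side of the identity proved in Lemma \ref{thm_dis}. Combined, these two facts force $F^*_{\{\bm{p},t\}}$ to be a maximizer.

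For the first step, I would construct an explicit coupling between $F_N$ and $F^*_{\{\bm{p},t\}}$. Since both distributions are discrete with $N$ equally weighted atoms, the natural choice is the joint measure $K$ that places mass $1/N$ on each pair $(\hat{\bm{\xi}}^i,\bm{\xi}^{(i)}_{\{\bm{p},t\}})$ for $i\in[N]$; its marginals are exactly $F_N$ and $F^*_{\{\bm{p},t\}}$. Plugging this coupling into the definition of $d_W$ yields
\begin{equation*}
d_W(F_N,F^*_{\{\bm{p},t\}}) \le \frac{1}{N}\sum_{i=1}^{N} d(\hat{\bm{\xi}}^i,\bm{\xi}^{(i)}_{\{\bm{p},t\}}) \le \epsilon_N,
\end{equation*}
where the last inequality uses that $\bm{\xi}_{\{\bm{p},t\}}\in\mathcal{B}$ as an optimal (hence feasible) point of the program in Lemma \ref{thm_dis}. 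This establishes $F^*_{\{\bm{p},t\}}\in\mathcal{F}_N$.

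For the second step, because $F^*_{\{\bm{p},t\}}$ is a discrete uniform measure on its $N$ atoms, a direct computation gives
\begin{equation*}
\mathbb{E}_{F^*_{\{\bm{p},t\}}}\{h(\bm{p},t,\bm{\xi})\} = \frac{1}{N}\sum_{i=1}^{N} h(\bm{p},t,\bm{\xi}^{(i)}_{\{\bm{p},t\}}).
\end{equation*}
By the very definition of $\bm{\xi}_{\{\bm{p},t\}}$ as an optimal solution of \eqref{worst-case-B}, the right-hand side equals $\sup_{\tilde{\bm{\xi}}\in\mathcal{B}}\{\frac{1}{N}\sum_{i=1}^{N} h(\bm{p},t,\bm{\xi}^{(i)})\}$, which in turn equals $\sup_{F\in\mathcal{F}_N}\mathbb{E}_F\{h(\bm{p},t,\bm{\xi})\}$ by Lemma \ref{thm_dis}.

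Combining the two steps: $F^*_{\{\bm{p},t\}}\in\mathcal{F}_N$ attains the value of the supremum, so it is a worst-case distribution, which is the claimed identity. The only mildly delicate point is ensuring the coupling argument in Step 1 is interpreted correctly in the $d_W$ definition (the marginals match and the integrand reduces to a finite sum), but this is routine; the substance of the theorem has already been carried by Lemma \ref{thm_dis}, and this proof simply assembles its optimal argument into an explicit probability measure.
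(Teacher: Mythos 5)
Your proposal is correct and follows essentially the same route as the paper: construct the coupling placing mass $1/N$ on each pair $(\hat{\bm{\xi}}^i,\bm{\xi}^{(i)}_{\{\bm{p},t\}})$ to show $F^*_{\{\bm{p},t\}}\in\mathcal{F}_N$, then observe that its expectation equals the optimal value of the finite-dimensional problem, which coincides with the supremum by Lemma \ref{thm_dis}. No gaps to report.
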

\begin{proof}
	{We first show $F^*_{\{\bm{p},t\}}$ belongs to the Wasserstein ball $\mathcal{F}_N$. Denote a joint distribution of  $F_N$ and $F^*_{\{\bm{p},t\}}$ as
	\begin{equation*}
		\begin{aligned}
			\Pi_{\{\bm{p},t\}} = \frac{1}{N}\sum_{i=1}^{N}\delta_{(\bm{\xi}^{(i)}_{\{\bm{p},t\}},\hat{\bm{\xi}}_i)}.
		\end{aligned}
	\end{equation*}
	Then by the definition of the Wasserstein metric we obtain that
	\begin{equation*}
		\begin{aligned}
			d_W(F_N,F^*_{\{\bm{p},t\}}) \le \int\left\|{\bm{\xi}}-{\bm{\xi}}^{\prime}\right\|_p \Pi_{\{\bm{p},t\}}\left(\mathrm{d} {\bm{\xi}}, \mathrm{d} {\bm{\xi}}^{\prime}\right)=\frac{1}{N} \sum_{i=1}^{N} \| {\bm{\xi}}_{\{\bm{p},t\}}^{(i)}-\hat{\bm{\xi}}^{i}\|_p \le \epsilon_N,
		\end{aligned}
	\end{equation*} 
	where the last inequality holds since
	$\left(\bm{\xi}^{(1)}_{\{\bm{p},t\}},\dots,\bm{\xi}^{(N)}_{\{\bm{p},t\}}\right) \in \mathcal{B}$. Thus, $F^*_{\{\bm{p},t\}}$ is contained in the Wasserstein ball $\mathcal{F}_N$. Consequently, we obtain that 
	\begin{equation*}
		\begin{aligned}
			&\sup_{F\in \mathcal{F}_N}\mathbb{E}_F \left\{h(\bm{p},t,{\bm{\xi}})\right\}\ge\mathbb{E}_{F^*_{\{\bm{p},t\}}}\left\{h(\bm{p},t,{\bm{\xi}})\right\}=
			\frac{1}{N}\sum\limits_{i=1}^{N}\left(h(\bm{p},t,
			\bm{\xi}^{(i)}_{\{\bm{p},t\}})\right)
			=\sup_{F\in \mathcal{F}_N}\mathbb{E}_F \left\{h(\bm{p},t,\bm{\xi})\right\}
		\end{aligned}
	\end{equation*}
	where the last equality holds due to Lemma \ref{thm_dis}. This implies that $F^*_{\{\bm{p},t\}}$ is one of the worst-case distribution in the Wasserstein ball $\mathcal{F}_N$. Here we complete this proof.}
\end{proof}

{Theorem \ref{worst-case-dis-1} implies that the worst-case distribution has a finite support and the number of support elements is the same as the number of the data samples. As the moment-based DRSP model is only solved by an approximation method, they are unable to provide any result on the worst-case distribution.}

\section{Extensions} \label{EtOP}
\subsection{DR Bi-criteria Shortest Path Problem}
{Now we extend our DRSP model to the bi-criteria DR shortest path problem, i.e., }
{	
\begin{equation}
\label{bi-cri}
\begin{aligned}
\minimize_{ t\in \mathbb{R},\bm{p}\in \mathcal{P}}\ \ \begin{bmatrix}
\sum\limits_{(i,j)\in \mathcal{A}}c_{ij}p_{ij}\\
t + \frac{1}{\alpha}\sup_{F\in\mathcal{F}_N}\mathbb{E}_F\{h({\bm{\xi}},t,\bm{p})\}
\end{bmatrix},
\end{aligned}
\end{equation}
where $\bm{c} = \{c_{ij}:(i,j)\in \mathcal{A}\}$ is the deterministic vector of the travel cost over all arcs and 
$c_{ij}$ is the cost on each arc $(i,j) \in \mathcal{A}$. Moreover, we set $\Xi = \mathbb{R}^n$ in this subsection.}

{Since the existence of a path simultaneously minimizing both objectives in (\ref{bi-cri}) cannot be guaranteed, we follow the idea of \citet{mangasarian1994nonlinear} to seek weakly robust efficient paths with a bearable trade off between two objectives.}

\begin{defi}
	\label{scalar}
	{Let $f_1(\bm{p}) = \bm{c}^T\bm{p}$, $f_2(\bm{p},{t}) = t + \frac{1}{\alpha}\mathbb{E}_F \{h({\bm{\xi}},t,\bm{p})\}$, then $\left\{\overline{\bm{p}},\overline{t}\right\}$ is a weakly robust efficient solution of (\ref{bi-cri}) if and only if there exists $\overline{\lambda}_1 \ge 0,\overline{\lambda}_2 \ge 0$ and $\overline{\lambda}_1+\overline{\lambda}_2\neq 0$ such that 
	\begin{equation*}
	\begin{aligned}
	&\overline{\lambda}_1f_1(\bm{p})+\overline{\lambda}_2\sup_{F\in \mathcal{F}_N}f_2(\bm{p},{t})
	\ge \overline{\lambda}_1f_1(\overline{\bm{p}})+\overline{\lambda}_2\sup_{F\in \mathcal{F}_N}f_2(\overline{\bm{p}},\overline{t}) 
	\end{aligned}
	\end{equation*}
	holds for any feasible pair of $\left\{{\bm{p}},{t}\right\}$.}
\end{defi}

\begin{coro}
	\label{b-DRSP} 
	{The weakly robust efficient solution to \eqref{bi-cri} over the Wasserstein ball $\mathcal{F}_N$ can be obtained by solving the following problem
	\begin{equation}\label{db-DRSP}
	\begin{aligned}
	\minimize_{\bm{p},\bm{s},\lambda} \ \ \ &\overline{\lambda}_1f_1(\bm{p})+\overline{\lambda}_2\left(t+\frac{1}{\alpha}\frac{1}{N}\sum\limits_{i=1}^{N}s_i+\lambda\epsilon_N\right) \\
	\st \ \ &\bm{p}^T\hat{\bm{\xi}}^i-t \le s_i, s_i \ge 0, \forall i\in [N],\\
	& \| \bm{p} \|_q \le \lambda, \bm{p}\in \mathcal{P}.			
	\end{aligned}
	\end{equation}}
\end{coro}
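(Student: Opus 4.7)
The plan is to reduce Corollary \ref{b-DRSP} to a direct application of Theorem \ref{thoe1} combined with the scalarization criterion in Definition \ref{scalar}. By Definition \ref{scalar}, a pair $\{\overline{\bm{p}},\overline{t}\}$ is weakly robust efficient if and only if it is a minimizer of the scalarized objective
\begin{equation*}
\overline{\lambda}_1 f_1(\bm{p}) + \overline{\lambda}_2 \sup_{F\in\mathcal{F}_N} f_2(\bm{p},t)
\end{equation*}
over all feasible pairs $(\bm{p},t)$ with $\bm{p}\in\mathcal{P}$ and $t\in\mathbb{R}$. So the task collapses to re-expressing this scalarized single-objective program as a tractable mixed 0-1 convex problem.

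The key step is to replace the inner worst-case expectation hidden inside $f_2$ by its LP representation obtained in Theorem \ref{thoe1}. Since $\Xi=\mathbb{R}^n$ is assumed in this subsection, Theorem \ref{thoe1} yields that $\sup_{F\in\mathcal{F}_N}\mathbb{E}_F\{h(\bm{p},t,\bm{\xi})\}$ equals the optimal value of the minimization of $\tfrac{1}{N}\sum_{i=1}^N s_i + \lambda\epsilon_N$ subject to $\bm{p}^T\hat{\bm{\xi}}^i-t\le s_i$, $s_i\ge 0$ for all $i\in[N]$, and $\|\bm{p}\|_q\le \lambda$. Substituting this LP representation into $f_2$ and then into the scalarized objective, and noticing that the coefficient $\overline{\lambda}_2/\alpha$ multiplying the inner minimization is non-negative, I can pull the $(\bm{s},\lambda)$-minimization outward and merge it with the outer minimization over $(\bm{p},t)$ into a single joint program, which is precisely \eqref{db-DRSP}.

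The main obstacle I anticipate is handling the degenerate case $\overline{\lambda}_2=0$ (which is permitted by Definition \ref{scalar} as long as $\overline{\lambda}_1\neq 0$): the $(\bm{s},\lambda)$ variables then carry no weight in the objective, so the merging argument is vacuous. A brief separate check is needed to verify that any $\bm{p}^\ast$ minimizing $\bm{c}^T\bm{p}$ over $\mathcal{P}$, together with sufficiently large auxiliary $\bm{s}$ and $\lambda$ chosen only to satisfy the feasibility inequalities of \eqref{db-DRSP}, solves \eqref{db-DRSP} and is weakly robust efficient in the sense of Definition \ref{scalar}. In the non-degenerate regime $\overline{\lambda}_2>0$ the merging is purely mechanical and requires no further argument beyond what Theorem \ref{thoe1} already supplies. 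Overall, no new ideas are needed beyond scalarization plus the dual LP representation of the w-METT.
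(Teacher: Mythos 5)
Your proposal is correct and follows essentially the same route as the paper: the paper's own proof simply observes that uncertainty enters only through $f_2$, which coincides with the DRSP objective, and invokes Theorem \ref{thoe1} to substitute its LP representation into the scalarized objective. Your additional care about the sign of $\overline{\lambda}_2/\alpha$ when merging the minimizations and the degenerate case $\overline{\lambda}_2=0$ is sound but goes beyond what the paper spells out.
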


\begin{proof}
		{Since the uncertainty only arises in the second objective function $f_2(\bm{p})$,  which equals the objective function in our DRSP model, the result follows from Theorem \ref{thoe1}.}
\end{proof}

{Corollary \ref{b-DRSP} implies that our DRSP model can be extended to the DR bi-criteria shortest path problem and its weakly robust efficient solutions can be obtained simply by solving a deterministic convex problem.}

\subsection{DR Minimum Cost Flow Problem}
{This subsection considers the DR minimum cost flow problem based on the Wasserstein ball, i.e.,
\begin{equation}
\begin{aligned}
\label{DRMCF}
\minimize_{\bm{x}} \ \ \ &\sup_{F\in \mathcal{F}_N}\mathbb{E}_F\left\{\sum\limits_{(i,j)\in\mathcal{A}}c_{ij}x_{ij}\right\}\\
\st \ \ & \sum\limits_{j:(i,j)\in\mathcal{A}}x_{ij}-\sum\limits_{j:(j,i)\in\mathcal{A}}x_{ji}=b_i,\forall i \in \mathcal{V}, \\
& 0 \le x_{ij} \le u_{ij}, \ \ \forall (i,j) \in \mathcal{A}, \\
\end{aligned}
\end{equation}
where $c_{ij}$ and $u_{ij} \ge 0$ are the random cost and the capacity of  arc $(i,j)$ respectively.  $b_i > 0$ is the supply of each vertex $i$. Let $\bm{x} = \{x_{ij}:(i,j)\in\mathcal{A}\}$ represent a flow from the source vertex to the sink vertex, where $x_{ij}$ is the flow on arc $(i,j)$. Let $\mathcal{X}$ be the set of feasible flows of problem (\ref{DRMCF}). }

{We are interested in the problem whose distribution of the random cost vector $\bm{c} = \{c_{ij}:(i,j)\in \mathcal{A}\} \in \mathbb{R}^n$ belongs to a Wasserstein ball $\mathcal{F}_N$. 	
We prove that (\ref{DRMCF}) can be reformulated as a modified network flow problem.}

\begin{coro}
	\label{MMCF}
	{Let $d(\cdot)  = \| \cdot \|_\infty$ and $\Xi = \mathbb{R}^n$ in the Wasserstein ball $\mathcal{F}_N$, the optimal DR flow to \eqref{DRMCF} can be obtained by solving 
	\begin{equation}
	\begin{aligned}
	\label{E2-DRMCF}
	\minimize_{\bm{x}} \ \ \ &\sum\limits_{(i,j)\in \mathcal{A}}\left(\frac{1}{N}\sum\limits_{k=1}^{N}\hat{c}_{ij}^k+\epsilon_N\right)x_{ij} \\
	\st \ \  & \bm{x} \in \mathcal{X},
	\end{aligned}
	\end{equation}
	where $\hat{\bm{c}}^k$ is the $k$th sample of the random cost vector and $\mathcal{X}$ is the set of all feasible flows.}
\end{coro}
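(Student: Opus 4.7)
The plan is to reduce the inner worst-case expectation to a closed-form expression by the same Wasserstein duality argument used in the proof of Theorem \ref{thoe1}, and then observe that with $\Xi = \mathbb{R}^n$, $d(\cdot) = \|\cdot\|_\infty$, a linear integrand, and a nonnegative flow $\bm{x}$, the resulting regularizer collapses to $\epsilon_N \sum_{(i,j)} x_{ij}$.

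First I would fix a feasible flow $\bm{x}\in\mathcal{X}$ and rewrite the supremum
$\sup_{F\in\mathcal{F}_N}\mathbb{E}_F\{\bm{c}^T\bm{x}\}$
as a conic linear program over the joint distribution $K$ between $F$ and $F_N$, exactly as in \eqref{eq8} but with the integrand $\bm{c}^T\bm{x}$ in place of $h(\bm{p},t,\bm{\xi})$. Strong duality (Slater holds since $K = F_N\times F_N$ is strictly feasible whenever $\epsilon_N>0$, and the $\epsilon_N = 0$ case is trivial as in Theorem \ref{thoe1}) yields the dual
\begin{equation*}
\min_{\lambda\ge 0}\Bigl\{\lambda\epsilon_N + \frac{1}{N}\sum_{k=1}^{N}\sup_{\bm{c}\in\mathbb{R}^n}\bigl\{\bm{c}^T\bm{x} - \lambda\|\bm{c}-\hat{\bm{c}}^k\|_\infty\bigr\}\Bigr\}.
\end{equation*}

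Next I would evaluate the inner supremum using Lemma \ref{lem 1} with $p = \infty$, so that the dual norm is $q = 1$. Writing $\Delta = \bm{c} - \hat{\bm{c}}^k$, this gives
\begin{equation*}
\sup_{\Delta\in\mathbb{R}^n}\bigl\{\bm{x}^T\Delta - \lambda\|\Delta\|_\infty\bigr\} + \bm{x}^T\hat{\bm{c}}^k
= \begin{cases} \bm{x}^T\hat{\bm{c}}^k & \text{if } \|\bm{x}\|_1 \le \lambda,\\ +\infty & \text{otherwise.}\end{cases}
\end{equation*}
Therefore the dual reduces to $\frac{1}{N}\sum_{k=1}^N \bm{x}^T\hat{\bm{c}}^k + \epsilon_N \|\bm{x}\|_1$, the minimum in $\lambda$ being attained at $\lambda = \|\bm{x}\|_1$.

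Finally I would exploit the sign restriction $x_{ij}\ge 0$ inherent to $\mathcal{X}$ to replace $\|\bm{x}\|_1$ by $\sum_{(i,j)\in\mathcal{A}} x_{ij}$, and regroup the two sums coefficient-by-coefficient to obtain
\begin{equation*}
\sup_{F\in\mathcal{F}_N}\mathbb{E}_F\{\bm{c}^T\bm{x}\} = \sum_{(i,j)\in\mathcal{A}}\Bigl(\frac{1}{N}\sum_{k=1}^{N}\hat{c}_{ij}^k + \epsilon_N\Bigr)x_{ij}.
\end{equation*}
Substituting this into the outer minimization over $\bm{x}\in\mathcal{X}$ yields \eqref{E2-DRMCF}. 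I expect essentially no obstacle beyond bookkeeping: the only subtle point is the collapse $\|\bm{x}\|_1 = \sum x_{ij}$, which relies crucially on the nonnegativity of flows built into $\mathcal{X}$, and verifying that strong duality still holds at $\epsilon_N=0$, both of which are direct and parallel the argument already given for Theorem \ref{thoe1}.
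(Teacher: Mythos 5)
Your proposal is correct and follows essentially the same route as the paper: the paper simply invokes Theorem \ref{thoe1} to obtain the dual reformulation with constraints $\sum_{(i,j)}\hat{c}_{ij}^k x_{ij}\le s_k$ and $\|\bm{x}\|_1\le\lambda$, then eliminates $s_k$ and $\lambda$ and uses $x_{ij}\ge 0$ to write $\|\bm{x}\|_1=\sum_{(i,j)}x_{ij}$, exactly as you do. Your explicit re-derivation of the duality for the linear integrand (rather than citing the theorem, which is stated for $h(\bm{p},t,\bm{\xi})=[\bm{\xi}^T\bm{p}-t]^+$) is a harmless, arguably more careful, presentation of the same argument.
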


\begin{proof}
	{By Theorem \ref{thoe1}, the problem (\ref{DRMCF}) admits an equivalent program
	\begin{equation}
	\begin{aligned}
	\label{E-DRMCF}
	\minimize_{\bm{x}} \ \ \ &\frac{1}{N}\sum\limits_{k=1}^{N}s_k + \lambda\epsilon_N \\
	\st \ \ & \sum\limits_{(i,j)\in \mathcal{A}}\hat{c}_{ij}^kx_{ij}\le s_k, \forall k \in [N],\\
	& \lambda \ge \| \bm{x} \|_1, \bm{x} \in \mathcal{X}.
	\end{aligned}
	\end{equation}
	Eliminating the variables $s_k$ and $\lambda$ from (\ref{E-DRMCF}) leads to
	\begin{equation}
	\begin{aligned}
	\label{E1-DRMCF}
	\minimize_{\bm{x}} \ \ \ &\frac{1}{N}\sum\limits_{k=1}^{N}\sum\limits_{(i,j)\in \mathcal{A}}\hat{c}_{ij}^kx_{ij} + \epsilon_N \| \bm{x} \|_1 \\
	\st \ \  & \bm{x} \in \mathcal{X}.
	\end{aligned}
	\end{equation}
	Since $x_{ij}\ge 0$, we can rewrite $\| \bm{x} \|_1$ as $\sum_{(i,j)\in \mathcal{A}}x_{ij}$ and reformulate (\ref{E1-DRMCF}) as (\ref{E2-DRMCF}),	which is a nominal minimum cost flow problem with edge cost $c_{ij} =\frac{1}{N}\sum_{k=1}^{N}\hat{c}_{ij}^k+\epsilon_N$.}
\end{proof}

{Corollary \ref{MMCF} implies that the DR minimum cost flow problem over the Wasserstein ball can be converted to a deterministic network flow problem with arc cost $c_{ij} = \frac{1}{N}\sum_{k=1}^{N}\hat{c}_{ij}^k+\epsilon_N$.}

\section{Experiments} \label{exper}
{Numerical experiments are conducted to validate the performance of our DRSP model in this section. {We take the commonly-used $l_1$-norm to compute the difference of travel time of two vectors. In view of Table \ref{form}, our DRSP model is reduced to a mixed 0-1 LP problem, which can be solved via existing optimization techniques.} All experiments are implemented on a 64 bit PC with an Intel Core i5-7500 CPU at 3.4GHz and 8 GB RAM. Cplex 12.6 is used to solve the mixed 0-1 LP problem.}

\subsection{DR Shortest Path Problems}
\label{DR-SPP}
{Experiments on the Eastern Massachusetts (EMA) network \citep{Transportation2019} with 74 vertices and 258 links are firstly performed to reveal the impact of different parameters on our DRSP model. Travel time of each arc is captured by a random vector ${\bm{\xi}}=\{\xi_{ij}:(i,j)\in\mathcal{A}\}$. We find paths from the origin vertex $1$ to the destination vertex $74$ by solving our DRSP model.}

\begin{figure}[htbp]
	\centering
	\includegraphics[scale=0.4]{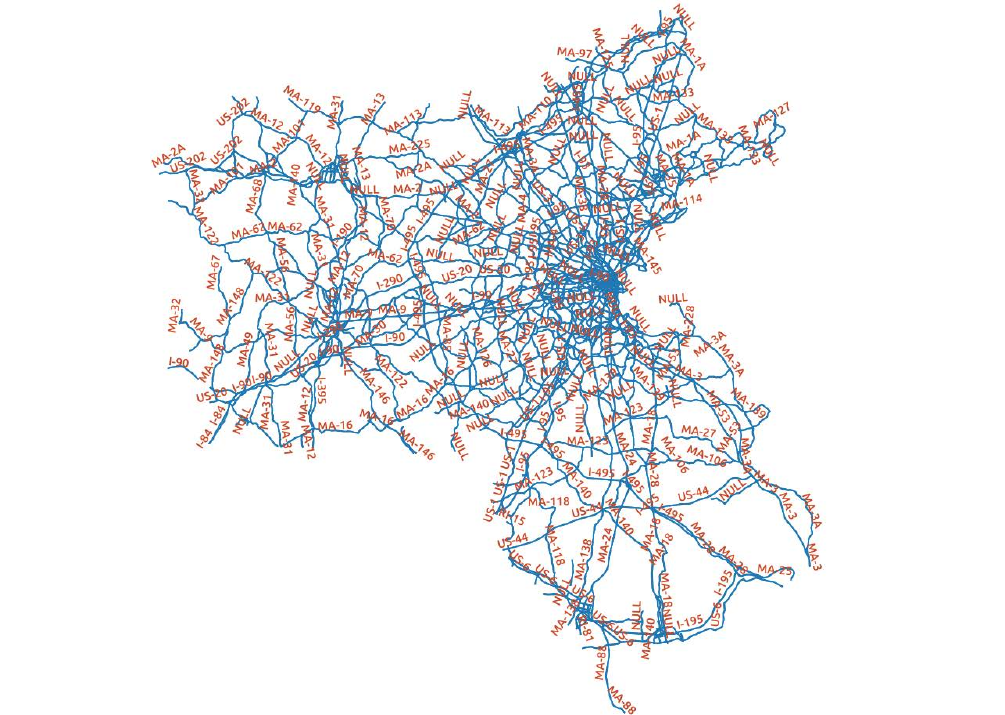}
	\caption{Eastern Massachusetts (EMA) network with 74 vertices and 258 links.}
	\label{fig:topo}
\end{figure}

{We assume that $\xi_{ij}$ of different arcs is independent across the network and follows a mixture of Gaussian distribution $\mathcal{N}(\mu_{ij},{\mu_{ij}}\times 10)$ and uniform distribution $\mathcal{U}(0,\mu_{ij})$ where $\mu_{ij}$ is obtained from \citet{Transportation2019}.  Clearly, both distributions satisfy Assumption \ref{assum1} with finite first moments. In the experiment, half of the dataset are generated from $\mathcal{N}(\mu_{ij},{\mu_{ij}}\times 10)$ and the rest are generated from $\mathcal{U}(0,\mu_{ij})$. Impacts of the Wasserstein radius $\epsilon_N$ and the dataset size  $N$ on the out-of-sample performance of our DRSP model are tested respectively. {We compare our model with the moment-based DRSP model  in \citet{Zhang2017Robust} and the SAA method in terms of the out-of-sample performance and computation complexity.} Table \ref{paraset} shows the use of parameters in different experiments.}
 
{We evaluate the out-of-sample performance by  examining the cost of the model under {\em new} samples, i.e., 
 \begin{equation}
 \label{Out_P}
 \minimize_{t\in\mathbb{R}} \left\{ t + \frac{1}{\alpha }\mathbb{E}_{F}\{h(\bm{p},t,\bm{\xi})\}\right\}
 \end{equation}}
 
{Noting that the  true distribution $F$  is a mixture of the Gaussian distribution and the uniform distribution, and importantly unknown. We are unable to exactly compute \eqref{Out_P}. Instead, we randomly generate $500$ samples from the Gaussian distribution and $500$ samples from the uniform distribution as test samples to approximate it, i.e.,
 \begin{equation*}
 \minimize_{t\in\mathbb{R}} \left\{ t + \frac{1}{\alpha N_T}\sum\limits_{i=1}^{N_T}\{h(\bm{p},t,\hat{\bm{\xi}}^i)\right\}
 \end{equation*}
where $\hat{\bm{\xi}}^i$ is the $i $th test sample and $N_T$ is the size of test dataset.}

\renewcommand{\arraystretch}{1.2} 
\begin{table*}[htb]	
	\centering
	{\begin{tabular}{|l|c|c|c|}
		\hline
		{ } &$\alpha$&$\epsilon_N$&$N$\cr\hline
		{ Impact of $\epsilon_N$}&$0.1$&$\{0,0.001,0.005,0.01,0.05,0.1,0.2,\dots, 1\}$&$\{30,100,300\}$\cr\hline
		{ Impact of $N$}&$0.1$&$0.1$&$\{30,50,\dots,290\}$\cr\hline
	\end{tabular}}
	\caption{Parameters in different experiments.}
	\label{paraset}
\end{table*}

\subsection{Performance of the DRSP Model Without Support Set}\label{PerB}
{In this subsection we evaluate the performance of our DRSP model in \eqref{eq3}.}

\subsubsection{Impact of the Wasserstein Radius} \label{ImpW}
{We first conduct experiments to test the impact of the Wasserstein radius $\epsilon_N$ on the out-of-sample performance of our DRSP model.  Parameters $\alpha$, $\epsilon_N$ and $N$ are selected as the first row of Table \ref{paraset}. } 

{Since $\|\bm{p}\|_{\infty}=1$ for any feasible path ${\bm p}$, one can argue from \eqref{uc} that $l_{1}$-norm in the Wasserstein distance is not sensible to our DRSP model without the support set. In this subsection we take $l_2$-norm in the Wasserstein distance.}

{We perform $200$ independent experiments and the averaged out-of-sample performance is shown in Figure \ref{radius}. It reveals that the performance improves until the Wasserstein radius exceeds a certain value, and then deteriorates as the radius increases. As a result, the Wasserstein radius of our DRSP model should be selected based on the size of the sample dataset for good out-of-sample performance.}

\begin{figure*}[htbp]
	\centering
	\subfigure[ ]{
		\label{fig:30 }
		\includegraphics[width=0.31 \textwidth]{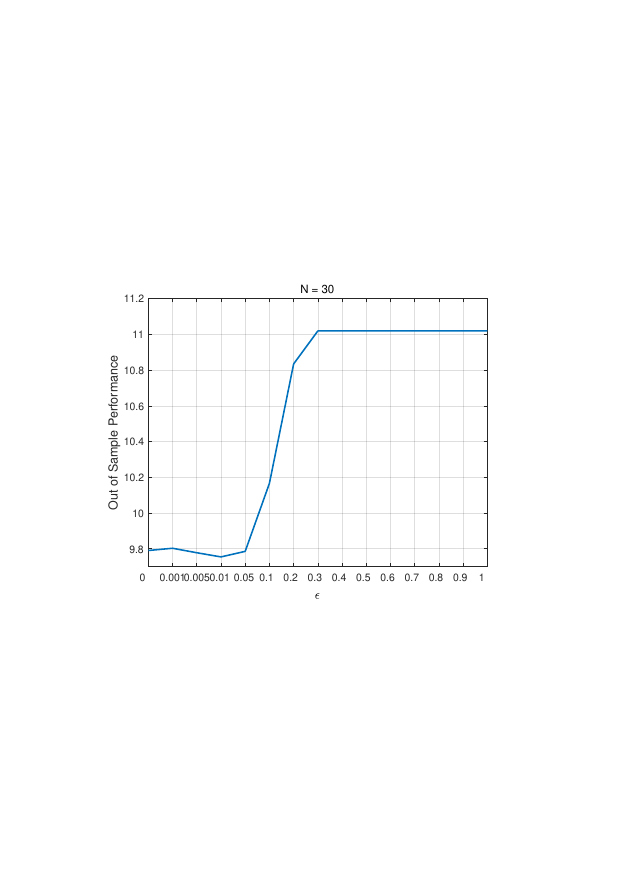}
	}
	\subfigure[ ]{
		\label{fig:100 }
		\includegraphics[width=0.31 \textwidth]{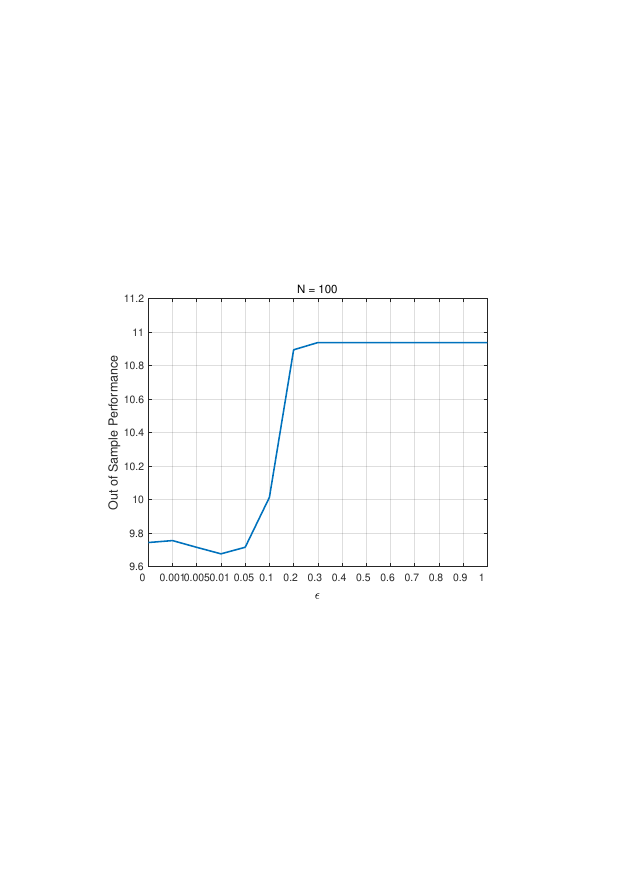}
	}
	\subfigure[ ]{
		\label{fig:300}
		\includegraphics[width=0.31 \textwidth]{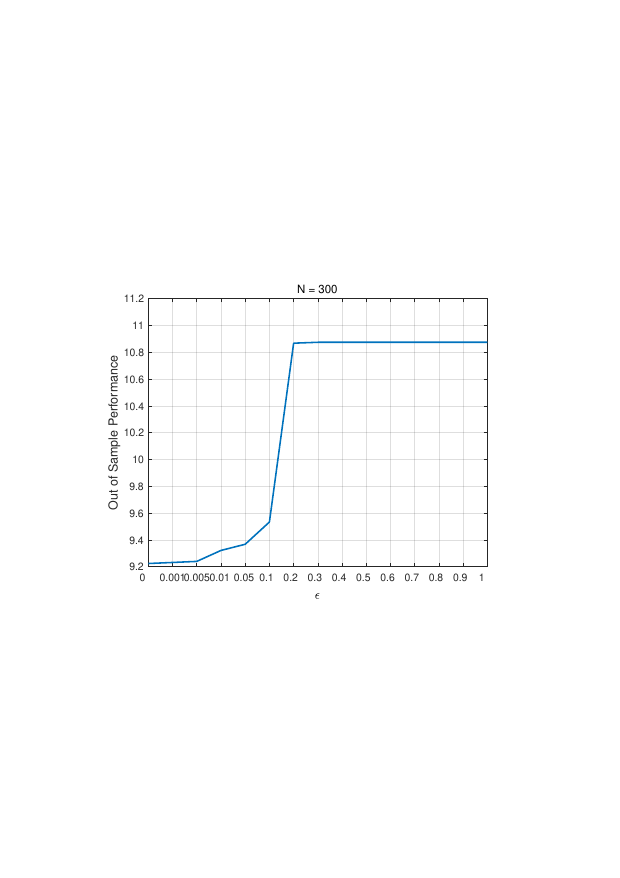}
	}
	\caption{{Averaged out-of-sample performance as a function of Wasserstein radius under sample datasets with different sizes. (a) $N=30$, (b) $N=100$, (c) $N = 300$.}}
	\label{radius}
\end{figure*}

\subsubsection{Impact of the Sample Size}\label{ImpSam}
{In this subsection we perform experiments on the sample dataset of different sizes to examine the impact of the sample sizes. Parameters in these experiments are set as the second row in Table \ref{paraset}. The averaged out-of-sample performance over $200$ independent experiments is presented in Figure \ref{fig:Sam}.}

\begin{figure}[htbp]
	\centering
	\includegraphics[scale=0.5]{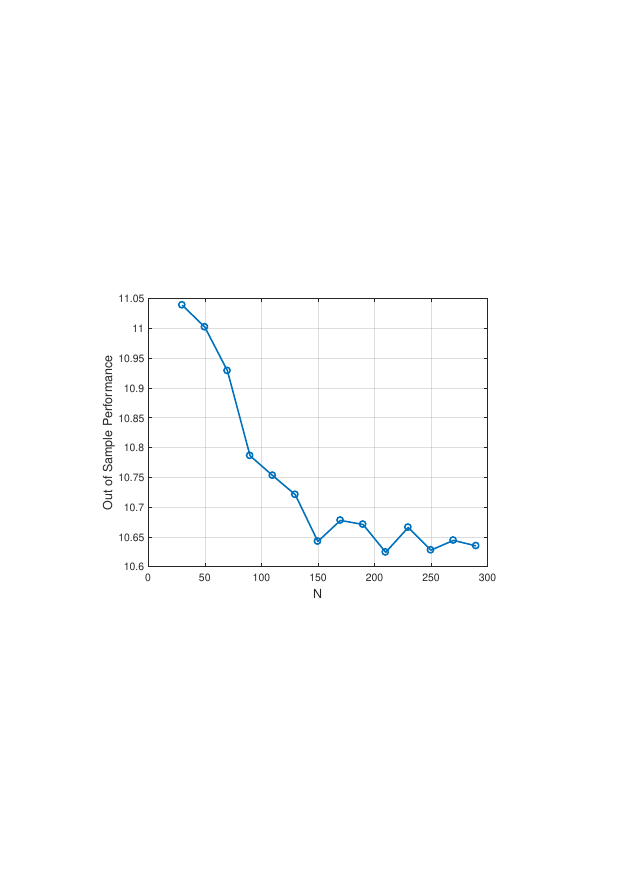}
	\caption{Averaged out-of-sample performance as a function of sample size $N$ for $200$ independent experiments,  where the support set of travel time is $\mathbb{R}^n$.}
	\label{fig:Sam}
\end{figure}

{The improvement of the out-of-performance with increasing dataset sizes in Figure \ref{fig:Sam} validates the asymptotic consistency of our model as shown in Theorem \ref{asy_con}.}

\subsection{Performance of the DRSP Model With Support Set} \label{PerS}
{This subsection validates our DRSP model with the support set $\Xi = [\bm{a},\bm{b}]$ in Section \ref{boundedsec}. We test the impact of radius $\epsilon_N$ and the sample size $N$ on our DRSP model. All parameters are set as Table \ref{paraset}. }

\subsubsection{Impact of the Wasserstein Radius}
{We first test the impact of radius $\epsilon_N$. Figure \ref{radius_ab} describes the averaged out-of-sample performance over $200$ independent experiments under different Wasserstein radii $\epsilon_N$. Similar to results in Section \ref{ImpW}, the out-of-sample performance obtains its optimal value at a certain point and then deteriorates as the radius increases. }

\begin{figure*}[htbp]
	\centering
	\subfigure[ ]{
		\label{fig:30b}
		\includegraphics[width=0.31 \textwidth]{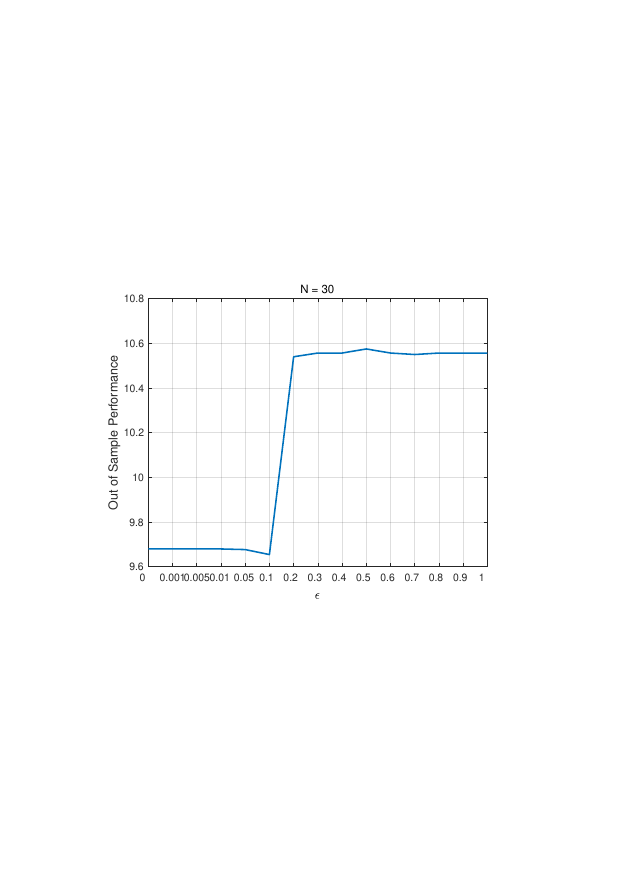} 
	}
	\subfigure[ ]{
		\label{fig:subfig: }
		\includegraphics[width=0.31 \textwidth]{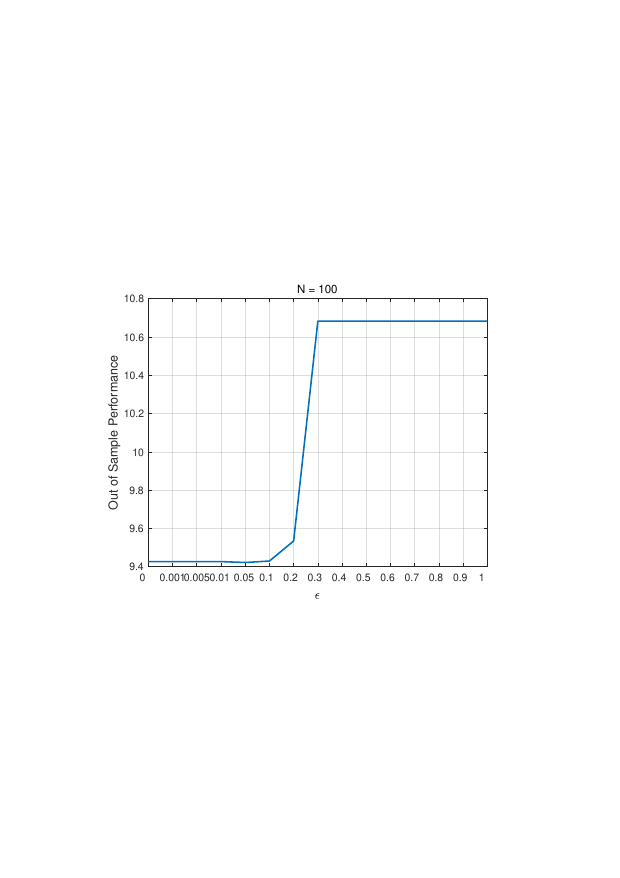}
	}
	\subfigure[ ]{
		\label{fig:300b}
		\includegraphics[width=0.31 \textwidth]{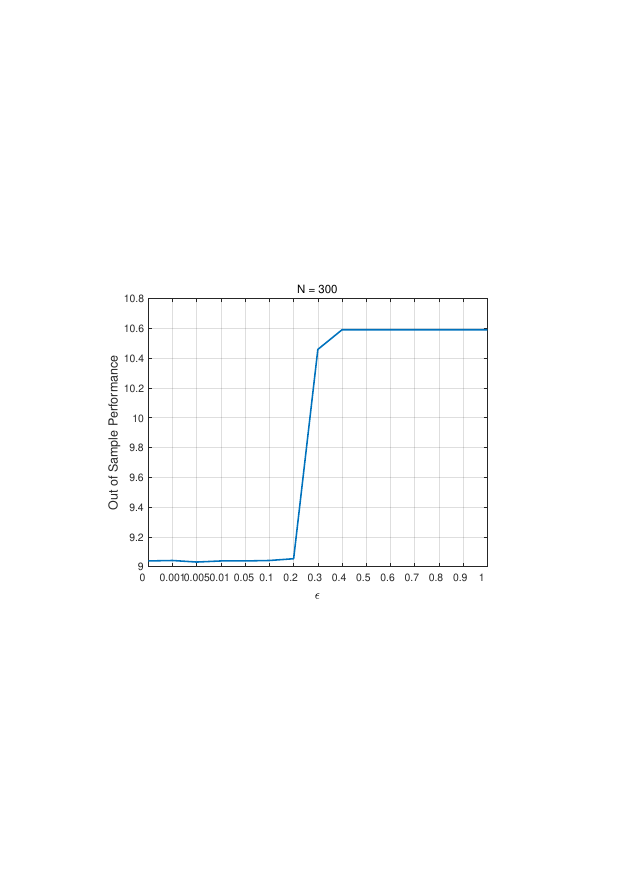}
	}
	\caption{{Averaged out-of-sample performance as a function of Wasserstein radius under the sample dataset with different sizes where the support set of travel time is $ [\bm{a},\bm{b}]$. (a) $N=30$, (b) $N=100$, (c) $N = 300$.}}
	\label{radius_ab}
\end{figure*}

\subsubsection{Impact of the Sample Size}
{Now, we test the impact of the sample size $N$. Figure \ref{fig:Sam_ab} shows the out-of-sample performance averaged over $200$ independent simulation runs as a function of $N$, in which the performance improves as $N$ increases.}

\begin{figure}[htbp]
	\centering
	\includegraphics[scale=0.5]{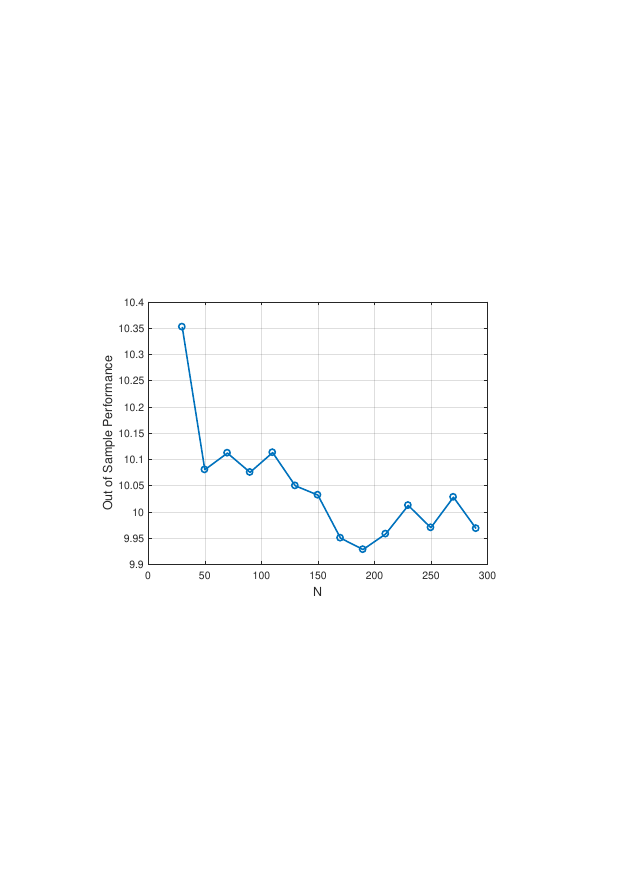}
	\caption{Averaged out-of-sample performance as a function of sample size $N$ averaged over $200$ independent simulations, where the support set of travel time is $[\bm{a},\bm{b}]$ .}
	\label{fig:Sam_ab}
\end{figure}

\subsection{Comparisons with the state-of-the-art methods}
\label{comparison}
{In this subsection we compare our model with the SAA method and the DRSP model in \citet{Zhang2017Robust} where the ambiguity set is based on moment constraints. We set $\alpha = 0.1$ and $N=\{20,30,50,100,200,300,500\}$ in this subsection. Noting that the Wasserstein radius $\epsilon_N$ depends on the training data. We tune it for different sample datasets to provide a powerful out-of-sample performance guarantee.}
{Since the moment-based model is an intractable co-positive program problem, \citet{Zhang2017Robust} derives dual approximation methods to provide a lower bound and an upper bound for the DR shortest path problem. They are denoted as M-LB and M-UB for short respectively.}

{We solve these models and evaluate their out-of-sample performance.  For the purpose of comparison, we use the following percentage difference 
\begin{equation*}
\begin{aligned}
	\left(\frac{\text{DR}}{\text{SAA}}-1\right)\times 100\%
\end{aligned}
\end{equation*} 
where {DR} denotes the out-of-sample performance of paths obtained from the DR shortest path model and SAA denotes that of the SAA method.}

{Comparisons in terms of the out-of-sample performance and the computation time are given in Table \ref{Com_in_out1} and Table \ref{Com_in_time1}, respectively. A negative value in Table \ref{Com_in_out1} indicates that the DR  performs better than the SAA. One can observe that our DRSP model with the support set (denoted as DRSP-S)  exhibits the best out-of-sample performance if the dataset is small, which however needs a longer computational time. As the dataset increases, our DRSP model without the support set also performs better than the SAA method and the moment-based DRSP model. More importantly, it can be solved in an appropriate time even when the sample set is large. Kindly note that both the exact mean and the variance are needed and essential to the moment-based DRSP model. From this perspective, they need more exact model information.}

\renewcommand{\arraystretch}{1} 
\begin{table*}[htb]	
	\centering
	{\begin{tabular}{|c| c c c c c c c|}
		\hline
		\multirow{2}{*}{Method} &\multicolumn{7}{c|}{Number of Samples }\cr\cline{2-8}
		&20&30&50&100&200&300&500\cr\hline
		{ DRSP}  &{1.6}  & {-0.3} & {-1.8}& {-2.5} & {-3.2} & \textbf{-5.2} & \textbf{-5.4}\cr
		{ DRSP-S}&\textbf{-7.1} &\textbf{-6.4} &\textbf{-6.4}	&\textbf{-6.1}	&\textbf{-6.6} &{-5.0} &{-3.3} \cr
		{ M-LB}  &-0.1  &-0.2 &-0.2 &-0.5 &-0.6  &-0.3 &-0.8  \cr
		{ M-UB}  &0.6 &-0.5 &-1.6 &-2.4 &-2.0 &-0.2 &-0.6  \cr
		\hline
	\end{tabular}}
	\caption{Percentage differences (in $\%$) between the DR models and SAA in terms of out-of-sample performance.}
	\label{Com_in_out1}
\end{table*}

\renewcommand{\arraystretch}{1} 
\begin{table*}[htb]	
	\centering
	{\begin{tabular}{|c|c c c c c c c|}
		\hline
		\multirow{2}{*}{Method} &\multicolumn{7}{c|}{Number of Samples}\cr\cline{2-8}
		 &20&30&50&100&200&300&500\cr\hline
		{ DRSP}  &0.13 &0.16 &0.16 &0.23 &1.93 &3.16 &3.85 	\cr
		{ DRSP-S}&0.66 &1.05 &2.02 &5.93 &18.26 &37.48 &100.72  \cr
		{ M-LB}  &68.16 &113.13	&180.66 &413.46 &844.54 &1158.12 &2090.82	 \cr
		{ M-UB}  &3.40 &3.73 &3.71 &6.44 &11.66 &23.20 &29.77 	 \cr
		{ SAA}	 &0.73 &0.83 &1.89 &2.43 &3.98 &7.58 &11.50  \cr
		\hline
	\end{tabular}}
	\caption{Averaged computation time (second) of different methods.}
	\label{Com_in_time1}
\end{table*}

\subsection{Real Road Network Experiments}
{Now we evaluate our DRSP model on a road network with a real dataset. We compare our model with the SAA and the moment-based model. In this experiment, both $\alpha$ and $N$ are the same as these in Section \ref{comparison} and the Wasserstein radius $\epsilon_N$ is tuned according to the sample dataset.}

{We construct a dataset of travel time from Tsinghua University (THU) to Beijing Capital International Airport (BCIA) captured from the AMAP which provides a live traffic data interface\footnote{https://lbs.amap.com/api/webservice/guide/api/direction\#driving}. Firstly we select twenty-one paths which individuals usually take from THU to BCIA as illustrated in Figure \ref{amap}. We set THU as the origin vertex and BCIA as the destination vertex. Moreover, we select several way-points on each path as vertices in the network and then provide an illustrative network of the map in Figure \ref{map_sim}.}

\begin{figure}[htbp]
	\centering
	\includegraphics[scale=0.2]{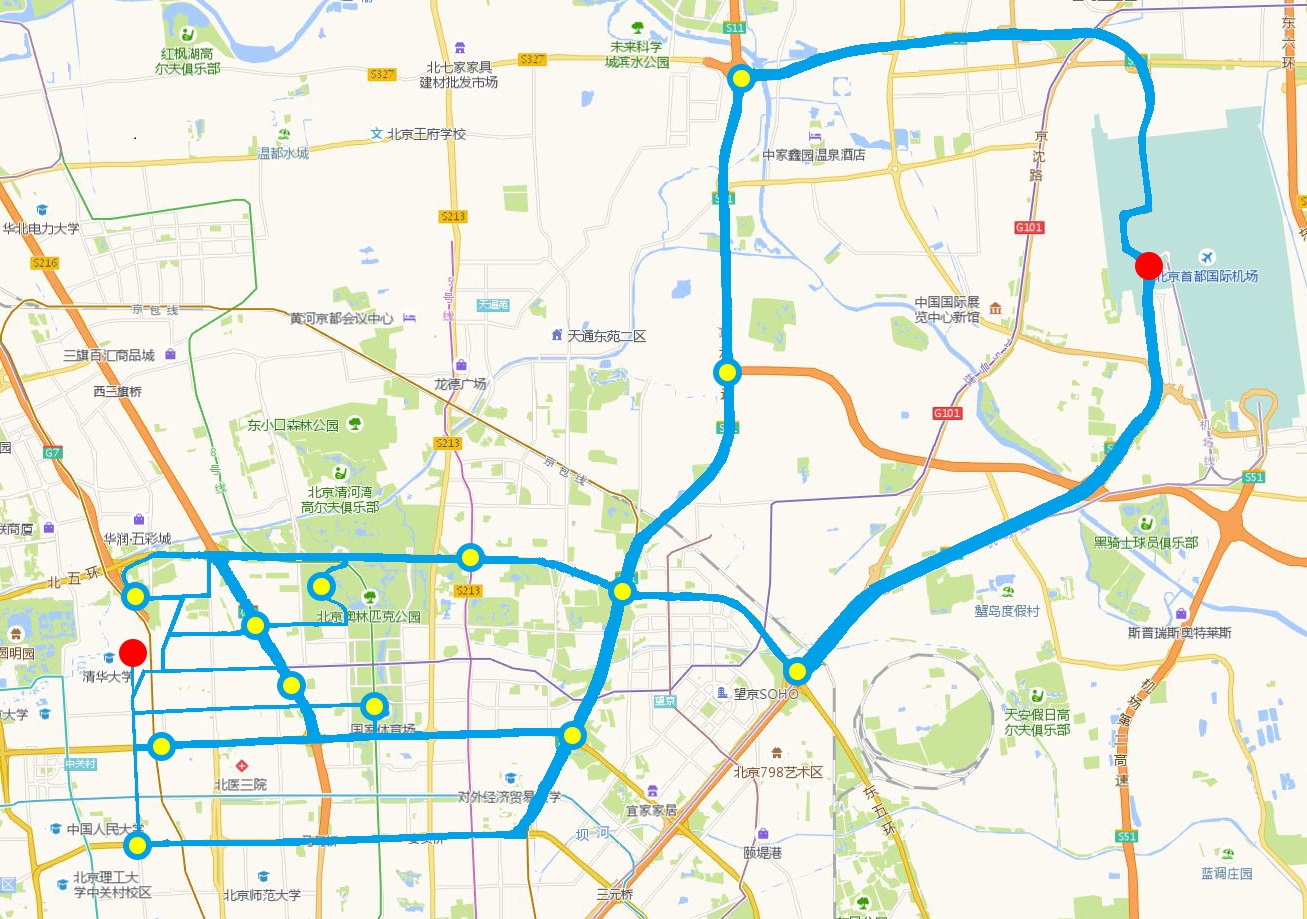}
	\caption{Map with 21 paths from THU to BCIA.}
	\label{amap}
\end{figure}
\begin{figure}[htbp]
	\centering
	\includegraphics[scale=0.4]{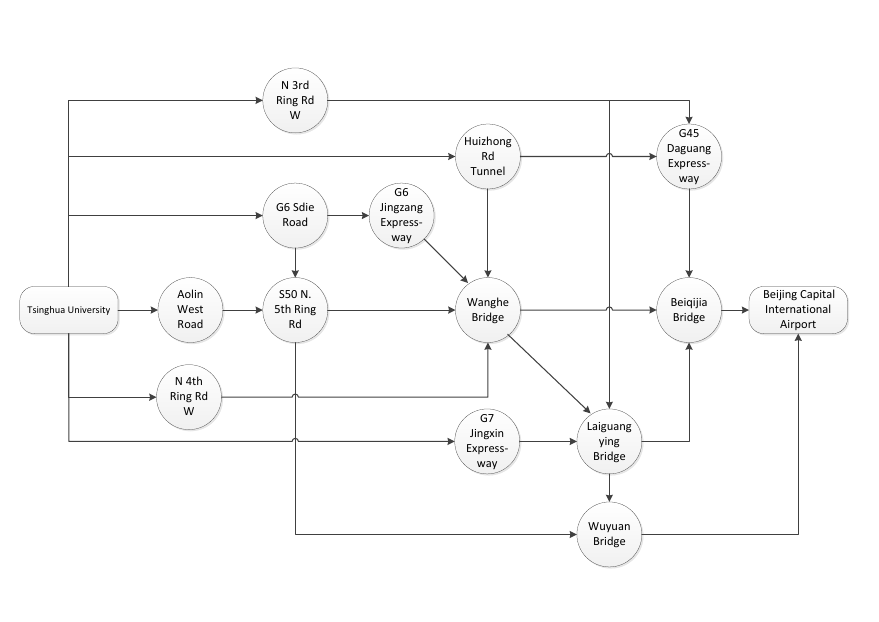}
	\caption{Simplified network between THU and BCIA.}
	\label{map_sim}
\end{figure}

{We collect travel time of each arc along paths as shown in Figure \ref{map_sim} and update it every 10 minutes over a week spanning from Sunday morning, January 06, 2019 to Sunday morning, January 13, 2019. Thus, we have collected $24\cdot6\cdot7=1008$ different observations where each data point contains the travel time of all arcs.}

{Since the true distribution $F$ of the travel time is unknown, it is unable to exactly compute the out-of-sample performance. Similar to Section \ref{DR-SPP}, we utilize $300$ randomly chosen test samples from the dataset to approximate the out-of-sample performance.}

{It should be noticed that we do not consider transportation expenses such as tolls for high-speed and bridge maintenance of roads. Thus, the optimal path may charge more than others. Moreover, the time spent on waiting for the traffic light is ignored as well.}

\renewcommand{\arraystretch}{1} 
\begin{table*}[htb]
	
	\centering
	{\begin{tabular}{|c| c c c c c c c|}
		\hline
		\multirow{2}{*}{Method} &\multicolumn{7}{c|}{Number of Samples }\cr\cline{2-8}
		&20&30&50&100&200&300&500\cr\hline
		{ DRSP} &-0.05 &-0.5 &{-2.3} & {-2.4} & \textbf{-3.5}	&{-3.4} & {-3.2} 	\cr
		{ DRSP-S}&\textbf{-1.3} &\textbf{-2.1} &\textbf{-4.3} &\textbf{-3.8} &{-3.3}	&\textbf{-4.3} &\textbf{-3.7}  \cr
		{ M-LB}  &{0.9} & 0.6 &-1.5 &-2.1 	&-2.3 &-1.9 &-2.0	 \cr
		{ M-UB} & 0.8 & 0.5 &-1.9 &-1.7 &-1.9 &-2.1 &-1.8  \cr
		\hline
	\end{tabular}}
	\caption{Percentage differences (in $\%$) between the DR solutions and SAA solutions for out-of-sample performance.}
	\label{Com_in_out2}
\end{table*}

\renewcommand{\arraystretch}{1} 
\begin{table*}[htb]
	
	\centering
	{\begin{tabular}{|c| c c c c c c c|}
		\hline
		\multirow{2}{*}{Method} &\multicolumn{7}{c|}{Number of Samples }\cr\cline{2-8}
		&20&30&50&100&200&300&500\cr\hline
		{ DRSP}  &0.08 &0.10 &0.11 &0.13 &0.21 &0.33 &0.52	\cr
		{ DRSP-S}&0.11 &0.13 &0.17 &0.27 &0.58 &1.01 &1.82	 \cr
		{ M-LB}  &0.14 &0.31 &0.30 &0.96 &2.98 &6.93 &18.23 \cr
		{ M-UB}  &0.40 &0.33 &0.37 &0.56 &0.84 &1.27 &1.96	 \cr
		{ SAA}	 &0.21 &0.21 &0.25 &0.47 &0.80 &1.20 &2.15	 \cr
		\hline
	\end{tabular}}
	\caption{Averaged computation time (second) of different methods in different experiments.}
	\label{Com_in_time2}
\end{table*}


{Results averaged on $200$ independent simulations are given in Table \ref{Com_in_out2} and Table \ref{Com_in_time2} which confirm the advantages of our DRSP models as expected. }

\section{Conclusion} \label{con}
We have proposed a data-driven DRSP model for finding optimal paths to minimize the w-METT over a Wasserstein ball. {Our DRSP model can be reformulated as a solvable finite convex problem while the DRSP model over the moment-based ambiguity set is an intractable co-positive program. We derived an explicit form of the worst-case distribution in w-METT.  Experimental results validate that the proposed DRSP model provides good out-of-sample performance. Moreover, our model can be extended to the DR bi-criteria shortest path problem and the minimum cost flow problem easily. }

\section*{Acknowledgements}
The authors would like to thank the Associate Editor and anonymous reviewers for their very constructive comments, which greatly improved the quality of this work.
This work is supported by the National Natural Science Foundation of China under grant number 61722308,  U1660202, and 71871023.

\section*{References}

\bibliography{mybib}

\end{document}